\def\xym{\xymatrix}
\newtheorem{theorem}{Theorem}[section]
\newtheorem{corollary}[theorem]{Corollary}
\newtheorem{lemma}[theorem]{Lemma}
\theoremstyle{definition}
\newtheorem{remark}[theorem]{Remark}
\def\cald{{\mathcal{D}}}
\def\calx{{\mathcal{X}}}
\def\calm{{\mathcal{M}}}
\def\calp{{\mathcal{P}}}
\def\calf{{\mathcal{F}}}
\def\calx{{\mathcal{X}}}
\def\vsp{\vspace*{1,5mm}\\ }
\def\bk{\bigskip }
\def\mk{\medskip }
\def\n{\noindent }
\def\dd{\displaystyle}
\def\barr{\begin{array}}
\def\earr{\end{array}}
\def\FP{Fokker--Planck}
\def\divv{{\rm div}}
\def\rrd{{\rr^d}}
\def\rr{{\mathbb{R}}}
\def\nn{{\mathbb{N}}}
\def\1{^{-1}}
\def\9{{\infty}}
\def\lbb{{\lambda}}
\def\wt{\widetilde}
\def\ov{\overline}
\def\vf{{\varphi}}
\def\oo{{\omega}}
\def\ooo{{\Omega}}
\def\vp{{\varepsilon}}
\def\ff{\forall }
\def\({\left(}
\def\){\right)}
\def\<{\left<}
\def\>{\right>}
\title{\bf The invariance principle for~nonlinear  Fokker--Planck equations}
\author{{\bf Viorel Barbu}\footnote{Octav Mayer Institute of Mathematics of the Romanian Academy and Al.I. Cuza University, Ia\c si, Romania}\and  {\bf Michael R\"ockner}\footnote{Fakult\"at f\"ur Mathematik, Universit\"at Bielefeld, D-33501, Bielefeld, Germany}\ \footnote{Academy of Mathematics and Systems Science, CAS, Beijing, China}}
\date{}
\begin{document}
\maketitle

\begin{abstract}
\n One studies here, via the La Salle invariance principle for nonlinear semigroups in Banach spaces, the  properties  of the $\oo$-limit set $\oo(u_0)$ corresponding to the orbit $\gamma(u_0)=\{u(t,u_0);\ t\ge0\}$, where $u=u(t,u_0)$ is the solution to the nonlinear \FP\ equation
$$\barr{l}
u_t-\Delta\beta(u)+\divv(Db(u)u)=0\mbox{\ \ in }(0,\9)\times\rr^d,\\
u(0,x)=u_0(x),\ \ x\in\rr^d,\quad u_0\in L^1(\rrd),\ d\ge3.\earr$$Here, $\beta\in C^1(\rr)$ and  $\beta'(r)>0$, $\ff r\ne0$.  Moreover, $\beta$ is a sublinear  function, possibly de\-ge\-ne\-rate  in the origin, $b\in C^1(\rr)$, $b$ bounded, $b\ge b_0\in(0,\9),$ $D$ is bounded such that  $D=-\nabla\Phi$, where   $\Phi\in C(\rrd)$ is such that $\Phi\ge1,$  $\Phi(x)\to\9$ as $|x|\to\9$ and satisfies a condition of the form  $\Delta\Phi-\alpha|\nabla\Phi|^2\le0$, a.e. on $\rrd$. The main conclusion is that the equation has an equilibrium state and the set $\oo(u_0)$ is a non\-empty, compact  subset of $L^1(\rr^d)$ while, for each $t\ge0$, the operator $u_0\to u(t,u_0)$ is an iso\-metry on $\oo(u_0)$. In the non\-de\-ge\-ne\-rate case $0<\gamma_0\le\beta'\le\gamma_1$  studied in \cite{1}, it follows that $\lim\limits_{t\to\9}S(t)u_0=u_\9$ in $L^1(\rr^d)$, where $u_\9$ is the unique bounded stationary solution to the equation.\\
{\bf MSC 2010 Classification:} Primary 60H30, 60H10, 60G46; Secondary 35C99, 58J165.\\
{\bf Keywords:} Fokker--Planck  equation, McKean--Vlasov      equations, ge\-ne\-ralized  solution,  attractor, nonlinear semigroup.
\end{abstract}

\section{Introduction}\label{s1}

Consider here the nonlinear \FP\ equation (NFPE)
\begin{equation}\label{e1.1}
\barr{ll}
u_t-\Delta\beta(u)+\divv(Db(u)u)=0\mbox{\ \ in }(0,\9)\times\rrd,\vsp
u(0,x)=u_0(x),\ \  x\in\rrd,\ d\ge3,\ u_0\in L^1(\rrd),
\earr\end{equation}under the following assumptions on $\beta:\rr\to\rr$ and $D:\rrd\to\rrd$
\begin{itemize}
\item[(i)] $\beta\in C^1(\rr),\   \beta'(r)>0,\
\ff\,r\in\rr\setminus\{0\},$ $\beta(0)=0,$   and
\begin{equation}
\label{e1.2a}
\mu_1\min\{|r|^\nu,|r|\}\le|\beta(r)|\le\mu_2|r|,\ \ff\,r\in\rr,
\end{equation}for $\mu_1,\mu_2>0$ and $\nu>\frac{d-1}d,\ d\ge3.$
\item[(ii)] $D\in L^\9(\rrd;\rrd)\cap W^{1,1}_{\rm loc}(\rrd;\rrd),$
${\rm div}\,D\in(L^1(\rrd)+L^\9(\rrd))
\cap(L^2(\rrd)+L^\9(\rrd))$
 and $D=-\nabla\Phi$, where $\Phi\in C(\rrd)\cap W^{1,1}_{\rm loc}(\rr^d)$, satisfies the conditions
\begin{equation}
\label{e1.2}
\barr{c}
\Phi(x)\ge1,\ \ff\,x\in\rrd,\ \ \lim\limits_{|x|\to\9}\Phi(x)=+\9,\vsp
\Phi^{-m}\in L^1(\rrd)\mbox{ for some }m\ge2,\earr\end{equation}
\begin{equation}
\label{e1.3}
\mu_2\Delta\Phi(x)-b_0|\nabla\Phi(x)|^2\le0,\ \mbox{a.e. } x\in\rrd\setminus\{0\}.\end{equation}
\item[(iii)] $b\in C^1(\rr)$, $b$ bounded, $b(r)\ge b_0>0$ for all $r\in[0,\9).$
\end{itemize}
We note that \eqref{e1.3} implies that
$({\rm div}\,D)^-\in L^\infty(\rr^d)$.
It should also be noted that assumption (i) does not preclude the degeneracy of the nonlinear diffusion function $\beta$ in the origin. For instance, any continuous, increasing function $\beta:\rr\to\rr$ of the~form
$$\beta(r)=\left\{\barr{ll}
\mu_1 r|r|^{d-1}&\mbox{ for }|r|\le r_0,\vsp
\mu_2h(r)&\mbox{ for }|r|>r_0,\earr\right.$$where $r_0>0,\ \mu_1,\mu_2>0,\ |h(r)|\le L|r|,\ \ff\,r\in\rr,$ $L>0,$   satisfies \eqref{e1.2a} for a suitable $\gamma_1$. As regards Hypothesis (ii), an example of such a function $\Phi$  is

\begin{equation}
\label{e1.1a}
\Phi(x)=\left\{\barr{ll}
|x|^2\log|x|+\mu&\mbox{ for }|x|\le\delta=\exp\(-\frac{d+2}{2d}\),\vsp
\vf(|x|)+\eta|x|+\mu&\mbox{ for }|x|>\delta,
\earr\right.
\end{equation}where the constants $\mu,\eta>0$ are sufficiently large, and  $\vf:[\delta,\9)\to\rr$ is given by
$$\dd\vf(r)=\delta^2\log\delta-\eta\delta-
\int^r_\delta\frac{ds}
{\frac s\delta\(\frac d{2\delta+\eta d}+\frac\delta{\gamma_1(d-2)}\(\(\frac r\delta\)^{d-2}-1\)\)},\ \
\ff r\ge\delta.$$(See \cite{1}, Appendix.)

Such an equation arises in statistical physics (see, e.g., \cite{4},~\cite{4a},~\cite{6}, \cite{13a}) and   is relevant in nonequilibrium statistical mecha\-nics where it describes the dynamics of particle densities $\rho=\rho(t,x)$  in disor\-dered media subject to anomalous diffusion (see, e.g., \cite{4}, \cite{4a}, \cite{6}, \cite{13a}). The condition $D=-\nabla\Phi$ in Hypothesis (ii) means that the force field $D=D(x)$ {\it is conservative} and this property is related to the reversibility of stationary states (see \cite{10}).
 The classical Einstein and Smoluchowski equations $u_t-\Delta u+{\rm div}(Du)=0$ are associated with the Boltzmann-Gibbs distributions, while NFPE \eqref{e1.1} corres\-ponds to a generalized entropy (for instance, the Tsallis entropy in the case where $\beta(r)\equiv ar^q$). This equation can be derived also from the standard master equation associated with the particle transport model \cite{4a}. It should be emphasized that in all physical models governed by NFPE \eqref{e1.1}, $u(t,x)$ is either the density of particles at time $t$ or a probability density associated with the corresponding McKean-Vlasov equation
\begin{equation}
\label{e1.4a}
\barr{l}
dX(t)=b(u(t,X(t)))D(X(t))dt+\dd\frac1{\sqrt{2}}\(\frac{\beta(u(t,X(t)))}{u(t,X(t))}\)^{\frac12}dW(t),\\
X(0)=X_0.\earr\end{equation}
More exactly, if $u:[0,\9)\to L^1(\rr^d)$ is a distributional solution to \eqref{e1.1}, which is weakly $t$-continuous, then there is a weak solution $X$ to \eqref{e1.4a} on some probability space $(\ooo,\calf,\mathbb{P},\calf_t)$ such that $u(t,x)dx=\mathbb{P}\circ(X(t))^{-1}(dx)$, $u_0(x)dx=\mathbb{P}\circ(X_0)^{-1}(dx).$ (See \cite{2}, \cite{2a}.) In this sense, all our results in this paper have a probabilistic interpretation  and, in particular, we thus prove (see Theorem \ref{t2.2a}) the existence of an invariant measure for \eqref{e1.4a} for the class of degenerate cases, where $\beta$ is as in Hypothesis (i).

An efficient functional way to treat NFPE \eqref{e1.1} in $L^1(\rr^d)$, which is the natural state space for the well-posedness of this equation, is to represent it as an infinite dimensional Cauchy problem in $L^1(\rr^d)$. 

In fact, it was shown in \cite{2} (see, Lemmas 3.1, 3.2 therein)  that the operator $A_0:D(A_0)\subset L^1\to L^1$ defined by\newpage 
\begin{equation}
\label{e1.6a}\barr{rcl}
A_0u&=&-\Delta\beta(u)+\divv(Db(u)u),\ \ff\,u\in D(A_0),\vsp
D(A_0)&=&\{u\in L^1;\ -\Delta\beta(u)+\divv(Db(u)u)\in L^1\},\earr\end{equation}satisfies
\begin{eqnarray}
&R(I+\lbb A_0)=L^1,\ \ff\lbb>0,\label{e1.6}\end{eqnarray}and, for each $\lbb>0$, there is $J_\lbb:L^1\to D(A_0)$ such that
\begin{eqnarray}
&J_\lbb(u)\in(I+\lbb A_0)\1\{u\},\ \ff u\in L^1(\rr^d),\label{e1.10a}\\	
&|J_\lbb(u)-J_\lbb(v)|_1 \le|u-v|_1,\ \ff\,u,v\in L^1,\ \lbb>0,\label{e1.7}\end{eqnarray}
where $L^1=L^1(\rrd)$ with its norm denoted by $|\cdot|_1$. We also set  $L^1_{\rm loc}=L^1_{\rm loc}(\mathbb{R}^d)$ and
$$\calp=\left\{u\in L^1;\ \int_\rrd u(x)dx=1,\ u\ge0,\mbox{ a.e. on }\rrd\right\}.$$
Then the  operator $A:D(A)\subset L^1\to L^1,$ defined by
\begin{equation}
\label{e1.9}
\barr{rcl}
Au&=&A_0u,\ \ff\,u\in D(A),\vsp
 {D(A)}&=& {J_\lbb(L^1),}\earr
\end{equation}is $m$-accretive in $L^1$ (that is, satisfies \eqref{e1.6}--\eqref{e1.7}), $(I+\lbb A)\1=J_\lbb,$ and one has also
\begin{eqnarray}
&\label{e1.9a}
(I+\lbb A)\10=0,\ (I+\lbb A)\1\calp\subset\calp,\ \ \ff\lbb>0,\\
&\label{e1.12a}
(I+\lbb A)\1(L^1\cap L^\9)\subset L^1\cap L^\9,\ \ \ff\,\lbb\in(0,\lbb_0),
\end{eqnarray}for some $\lbb_0>0.$

We denote by $C=\ov{D(A)}$ the closure of $D(A)$ in $L^1 $. We note that, by \cite[Theorem 2.2]{2}, if  $\beta\in C^2(\rr)$,
 then $C=L^1$. Then (see, e.g, \cite{1a}, p.~139), by the Crandall \& Liggett generation theorem for each $u_0\in C$, the Cauchy problem
\begin{equation}
\label{e1.10}
\barr{l}
\dd\frac{du}{dt}+Au=0,\ \ t\ge0,\vsp
u(0)=u_0,\earr\end{equation}has a unique mild solution $u\in C([0,\9);L^1)$ and $S(t)u_0=u(t),\ t\ge0,$ is a semigroup of nonlinear contractions in $L^1 $. In addition (see \cite[Theorem~2.2]{2}), it leaves  $\calp\cap C$ invariant for all $t\ge0$, that is, 

\begin{equation}
\label{e1.11}
|S(t)u_0-S(t)v_0|_1\le|u_0-v_0|_1,\ \ \ff u_0,v_0\in C,\ t\ge0,\end{equation}
\begin{equation}
\label{e1.12}
S(t)C\subset C,\ S(t)(\calp\cap C)\subset\calp\cap C,\ \ \ff\,t\ge0,\end{equation}
\begin{equation}
\label{e1.15b}
S(t)(C\cap L^\9)\subset L^\9\cap L^1,\ \ \ff\,t>0,\end{equation}
\begin{equation}
\label{e1.18b}
S(t+s)=S(t)S(s),\ \ \ff\,t,s\ge0.  \end{equation}$S(t)$ is called the contraction semigroup in $L^1$ generated by $A$ on $C$.

We have
\begin{equation}
\label{e1.14a}
S(t)u_0=\lim_{n\to\9}\(I+\frac tn\ A\)^{-n}u_0\mbox{ in }L^1,\ \ff u_0\in C,
\end{equation}uniformly in $t$ on compact intervals.
This means that, for each $h>0$ and $0<T<\infty$, $$S(t)u_0=\lim\limits_{h\to0}u_h(t)\mbox{ in  $L^1$ uniformly on $[0,T]$,}$$where
\begin{equation}
\label{e1.18a}
\barr{l}
u_h(t)=u^{i+1}_h,\ \ t\in[ih,(i+1)h),\ i=0,1,...,N-1,\ N=\left[\frac Th\right],\vsp
u^{i+1}_h+hAu^{i+1}_h=u^i_h,\ i=0,1,...,N-1.\earr
\end{equation}
We shall call $S:[0,\9)\to {\rm Lip}_1(C)$ (= the space of all Lipschitz mappings on $C$ with Lipschitz constant less than $1$) the nonlinear semigroup (semiflow) cor\-res\-pon\-ding to NFPE \eqref{e1.1}, ({\it the nonlinear \FP\ semiflow}) and $u(t)=S(t)u_0$,  $t\ge0$,   {\it the generalized} (or {\it mild}) {\it  solution to NFPE \eqref{e1.1}}.
We note that this ge\-ne\-ra\-lized solution $u$ is also a Schwartz distributional solution to \eqref{e1.1} on $[0,\9)\times\rrd$, but the uniqueness in the class of Schwartz distributional solutions requires some more regularity on $\beta$ (see \cite{3a}).

 As a matter of fact, since the solution $J_\lbb(f)$ of \eqref{e1.10a} in general might not be unique, it should be emphasized that equation \eqref{e1.10} with $A$ of the form \eqref{e1.9} {\it is only one realization of a solution to the \FP\ equation \eqref{e1.1} as a Cauchy problem in the space $L^1 $, which depends on  the choice of the resolvent  $\{J_\lbb\}_{\lbb>0}$.} More precisely, $J_\lbb$ for $\lbb>0$ is constructed as a limit of an approximation $J^\vp_\lbb$ as $\vp\to0$, and in general it depends on the choice of this approximation. In Section \ref{s2a} below we shall explain which approximation we choose in this paper.

Now, assume that $u_0$ is a probability density. Equation  \eqref{e1.1} describes the evolution of an open system {\it far from equilibrium} and the transition of a state $u(t)$ to an equilibrium state, that is, the convergence of the solution $u(t)$ for $t\to\9$ to a stationary probability density $u_\9$ is our objective here. To analyze this problem with the orbit $\gamma(u_0)=\{S(t)u_0,\ t\ge0\}$ of $S(t)u_0$, where $u_0\in C$,  we associate the $\oo$-limit~set
\begin{equation}
\label{e1.13}
\barr{lcl}
\oo(u_0)&=&\dd
\left\{u_\9=\lim S(t_n)u_0
\mbox{ in $L^1$ for some $\{t_n\}\to\infty$}\right\}\vsp
&=&\dd\bigcap_{s\ge0}\ \  \ov{\bigcup_{t\ge s}S(t)u_0}.\earr
\end{equation}
The properties and structure of the set  $\oo(u_0)\subset C$ are important for the long time dynamics of the semiflow $S(t)$, $t\ge0$,  and play a central role in the theory~of general dynamical systems in finite or infinite dimension (see, e.g., \cite{2aa}, \cite{3}, \cite{5}, \cite{11}). In particular, if $\oo(u_0)\ne\emptyset$ and consists of one element $u_\9$ only, this means that
 $$\lim_{t\to\9}S(t)u_0=u_\9\mbox{ in }L^1.$$
  In \cite{1}, it was proved that under additional assumptions on $\beta$ and, more exactly, if $\beta$  is not degenerate in the origin, that is,
\begin{equation}
\label{e1.14}
0<\gamma_0\le\beta'(r)\le\gamma_1,\ \ \ff\,r\in\rr,\end{equation}(which again  implies that $C=L^1$), then, for each $u_0\in \calp$, such that
\begin{equation}
\label{e1.15a}
  u_0 \ln(u_0)\in L^1(\rr^d),\ \|u_0\|=\int_{\rrd}u_0(x)\Phi(x)dx<\9,\end{equation}
one has $\oo(u_0)=\{u_\9\}$, where $u_\9$ is an equilibrium solution to \eqref{e1.1}, and is the unique solution in $(L^1\cap L^\9)(\rrd)$ to the stationary equation
\begin{equation}
\label{e1.15}
-\Delta\beta(u)+\divv(Db(u)u)=0\mbox{\ \ in }\cald'(\rrd).\end{equation}
Moreover, $u_\9$ is an equilibrium solution, that is, it minimizes the free energy of the system. (See Remark \ref{r1} below.)

This result, which links the initial nonequilibrium state $u_0$ with the final equilibrium state $u_\9$, can be viewed as an $H$-theorem type for
NFPE \eqref{e1.1}. It also has some deep implications  for  {\it the McKean--Vlasov stochastic differential equation} \eqref{e1.4a} associated with NFPE \eqref{e1.1}.

The situation is different in the   degenerate case considered here, that is, where condition \eqref{e1.14} is weakened to (i). As seen below  (see Theorem~\ref{t1}), in this case $\oo(u_0)$ {\it is a nonempty},
compact subset of  $L^1 $ and, for every fix point of $S(t)$, $t>0$,
it is contained in some sphere centered at this fix point.
This means that there is a compact set $\oo(u_0)$ of probability densities which attracts the trajectory which starts from a nonequilibrium state $u_0$.   This behaviour is specific to open systems far   from thermo\-dynamic equilibrium  (\cite{4}, \cite{4a}).  It should be mentioned that Hypothesis (ii) part \eqref{e1.2} excludes the special case of the porous media equation
$$u_t-\Delta\beta(u)=0,\ \  t\ge0,\ x\in\rrd.$$

\n{\bf Notations.} We denote by $L^p$, $1\le p\le\infty$, the space of  Lebesgue $p$-integrable functions      on $\rr^d$   and by $L^p_{\rm loc}$ the space $L^p_{\rm loc}(\rrd)$. The norm in $L^p$ is denoted by $|\cdot|_p$ and the scalar product in $L^2$ is denoted by  $\left<\cdot,\cdot\right>_2$.
Let $C^k(\rr^\ell)$, $k=1,2,$ $\ell\ge1$, denote the space of $k$-differentiable functions on $\rrd$ and   $C_b(\rrd)$
the space of continuous and bounded functions on $\rr^d$.
Let $W^{k,p}(\rrd)$, $k=1,$ $1\le p\le\9$,
denote the classical Sobolev spaces on $\rrd$ and
 $\Delta,\nabla,\divv$ the standard differential operators on $\rrd$ taken in the sense of   Schwartz distributions, i.e. on   $\cald'(\rrd)$.
We set $H^k=W^{k,2}(\rr^d)$ and denote by $H^{-k}$ the dual space of $H^k$. The norm of $\rrd$ will   be denoted by $|\cdot|$. We shall also use the notation
\begin{equation}\label{e1.16}
\|u\|=\int_{\rrd}\Phi(x)|u(x)|dx,\ \ \ff\,u\in L^1,
\end{equation}
and denote by $\calm$ the subspace of $L^1$ with the norm \eqref{e1.16} finite. For each $\eta>0$, we set
\begin{equation}\label{e1.17}
\calm_\eta=\{u\in \calm;\, \|u\|\le\eta\}.
\end{equation}
 We also set
\begin{equation}
\label{e1.29}\calm_+=\{u\in\calm;\ u\ge0,\mbox{ a.e. in }\rr^d\}.
\end{equation}
Furthermore, $\calp$ denotes the set of all probability densities on $\rrd$.

\section{Construction of a solution semigroup\\ to~\eqref{e1.1}~with stationary point}\label{s2a}
	\setcounter{equation}{0}
	
	Though, as explained in the introduction, the existence of a solution semigroup to \eqref{e1.1} follows from \cite[Theorem 2.2]{2}, in this section we shall present a construction of a solution semigroup $S(t),$ $t\ge0$, for which we can prove that it has a stationary point, i.e., there exists $a\in L^1$ such that $S(t)a=a$,  $\ff t\ge0$.  So, let us fix $M\in[1,\9]$, $\vp\in(0,1]$ and assume that Hypotheses (i), (ii), (iii) hold. Consider the following approximating operator on $L^1$
	
	\begin{eqnarray}
	(A_0)_{\vp,M}u&\!\!\!=\!\!&-\Delta\beta_{\vp,M}(u)+{\rm div}(Db(u)u),\label{e21}\\
	D((A_0)_{\vp,M})&\!\!\!=\!\!&\!\!\{u\in L^1;\ -\Delta\beta_{\vp,M}(u){+}{\rm div}(Db(u)u)\in L^1\},\qquad\label{e22}
	\end{eqnarray}
	where
	\begin{equation}
		\label{e23}
		\beta_{\vp,M}(r):=\left\{\barr{ll}
		\beta(r)+\vp r,&\mbox{ if }|r|\le M,\vsp 
		\beta(M)+\beta'(M)(r-M)+\vp r,&\mbox{ if }r>M,\vsp 
		\beta(-M)+\beta'(M)(r+M)+\vp r,&\mbox{ if }r<-M,\earr\right.
	\end{equation}
	so that
	\begin{equation}
		\label{e23'}
		\beta_{\vp}:=\beta_{\vp,\9}=\beta+\vp I.
	\end{equation}
	Then, by \cite[Lemmas 3.1 and 3.2]{2} for every $\lbb>0$ there is a right inverse $J^{\vp,M}_\lbb:L^1\to D((A_0)_{\vp,M})$ of the operator $I+\lbb(A_0)_{\vp,M}:D((A_0)_{\vp,M})\to L^1$, i.e.
	$$R(I+\lbb(A_0)_{\vp,M})=L^1\mbox{\ \ and\ \ }(I+\lbb(A_0)_{\vp,M})J^{\vp,M}_\lbb= I
	$$such that $A_{\vp,M}:=(A_0)_{\vp,M\uparrow J^{\vp,M}_\lbb(L^1)}$ is $m$-accretive on $L^1$ and $J^{\vp,M}_\lbb(L^1)$ is independent of $\lbb>0$ and $J_\lbb^{\vp,M}(L^1\cap L^\9)\subset L^1\cap L^\9$. Applying the Crandall \& Liggett generation theorem (see above) to the operator $A_{\vp,M}$, we obtain the corresponding mild solution given by a nonlinear semigroup $S_{\vp,M}(t)$, $t\ge0$, on $L^1$ (see \cite[Theorem~2.2]{2} and note that $D(A_{\vp,M}):=J^{\vp,M}_\lbb(L^1)$ is dense in $L^1$, since $\vp>0$).
	
	Furthermore, by Theorem 2.1 in \cite{3a}, if $u_0\in L^1\cap L^\9$, then $S_{\vp,M}(t)u_0,$ $t\in[0,T]$, is the unique narrowly continuous (in $t\ge0$) weak solution in $(L^1\cap L^\9)((0,T)\times\rrd)$ of \eqref{e1.1}$_{\vp,M}$, where \eqref{e1.1}$_{\vp,M}$ denotes the NFPE \eqref{e1.1} with $A_{\vp,M}$ replacing $A$. In addition, for $M<\9$, by Theorem 6.1 in \cite{1} there exists $a_{\vp,M}\in\calp\cap\calm\cap L^\9(\rrd)$ which is given by
	\begin{equation}
		\label{e24}
	a_{\vp,M}(x)=g^{-1}_{\vp,M}(\mu_{\vp,M}-\Phi(x)),\ x\in\rrd,
\end{equation}
where
\begin{equation}
	\label{e25}
g_{\vp,M}(r):=\int^r_1\frac{\beta'_{\vp,M}(s)}{sb(s)}\ ds,\ \ r>0,
\end{equation} and $\mu_{\vp,M}\in\rr$ is the unique number such that
\begin{equation}
	\label{e26}
\int_{\rrd}g^{-1}_{\vp,M}(\mu_{\vp,M}-\Phi(x))dx=1,
\end{equation} 
\n such that $S_{\vp,M}(t)a_{\vp,M}=a_{\vp,M}$ for all $t\ge0$, $\lim\limits_{t\to\9}S_{\vp,M}(t)\wt u_0=a_{\vp,M}$ for all $\wt u_0\in\calp\cap\calm$ with $\wt u_0\ln\wt u_0\in L^1$ and (see \cite[Corollary 6.3]{1}) 
	\begin{equation}
		\label{e27}
|a_{\vp,M}|_\9\le\max\left(1,e^{\frac{|b|_\9}\vp\ (\mu_{\vp,M}-1)}\right).	\end{equation}
Furthermore, by \cite[Theorem 6.4]{1}, 
\begin{equation}
\label{2.8'}
-\Delta\beta_\vp(a_{\vp,M}+{\rm div}(Db(a_{\vp,M})a_{\vp,M})=0\mbox{\ \ in }\cald'(\rr^d).
\end{equation}
We note that, obviously, $g_{\vp,M}:[1,\9)\to[0,\9)$ and $g_{\vp,M}:(0,1)\to(-\9,0)$. Therefore, by \eqref{e26},
$$1\ge\int_{\{\Phi\le\mu_{\vp,M}\}}1\,dx,$$hence
\begin{equation}
	\label{e28}
\sup\{\mu_{\vp,M}\mid M\in [1,\9),\ \vp\in(0,1]\}<\9.
\end{equation}
Defining
$$\mu_{\vp}:=\sup_{M\in[1,\9)}\mu_{\vp,M}$$
we deduce from \eqref{e27} that, for all $M\in[1,\9)$,
\begin{equation}
	\label{e29}
|a_{\vp,M}|_\9\le\max\left(1,e^{\frac{|b|_\9}\vp\ (\mu_\vp-1)}\right)=:M_0.
\end{equation}
It follows by the weak uniqueness result from \cite{3a} mentioned above that
\begin{equation}
	\label{e210}
	a_\vp:=a_{\vp,M_0}=a_{\vp,M},\ \ \ff M\in[M_0,\9).
\end{equation}
Furthermore, for $M=\9,$ we obtain that for $S_\vp(t):=S_{\vp,\9}(t)$, $t\ge0$, we have
\begin{equation}
	\label{e211}
	S_\vp(t)a_\vp=a_\vp\mbox{\ \ for all }t\ge0.
\end{equation}

\begin{lemma}\label{l22} Let $\vp\in(0,1]$ and  $(A_0)_\vp:=(A_0)_{\vp,\9},$ $J^\vp_\lbb:=J^{\vp,\9}_\lbb$ and $$A_\vp:=(A_0)_{\vp\uparrow J^\vp_\lbb(L^1)},\ \ D(A_\vp):=J^\vp_\lbb(L^1),$$which $($as seen above$)$	  is $m$-accretive on $L^1$. Then:
	\begin{itemize}
		\item[\rm(i)] Let $f\in L^1\cap L^\9$. Then there exists $\lbb_0\in(0,\9)$ such that, for every $\lbb\in(0,\lbb_0]$, $J^\vp_\lbb(f)$ is the unique solution $u_\lbb\in L^1\cap L^\9$ of the equation
		\begin{equation}\label{e2.14}
		u_\lbb+\lbb(A_0)_\vp u_\lbb=f\mbox{\ \ in }\cald'(\rr^d).\end{equation}
		\item[\rm(ii)] $J^\vp_\lbb a_{\vp}=a_\vp,\ \ff\lbb\in(0,\lbb_0]$ and $a_\vp$ as above.
		\item[\rm(iii)] For every $\lbb\in(0,\lbb_0]$,
 $\dd\lim_{\vp\to0}J^\vp_\lbb f=J_\lbb f\mbox{\ \ in }L^1,$ where $J_\lbb$ is  as in \eqref{e1.6}, \eqref{e1.10a} in the introduction.
 \item[\rm(iv)]   Let $K\subset \rr^d$, $K$ compact, $q\in\(1,\frac d{d-1}\)$ and $\lbb\in(0,\lbb_0]$. Then there exists $C\in(0,\9)$ only depending on $K,q,\lbb,|D|_\9$ and $|b|_\9$ such that, for all $f\in L^1$,
 \begin{equation}\label{e2.15}
 \|\beta(J_\lbb f)\|_{L^q(K)}\le C|f|_1,
\end{equation}and,  for all $\nu\in\rr^d$, 
 \begin{equation}\label{e2.16}
\|\beta(J_\lbb f)^\nu\|_{L^q(K)}\le C\(\|E^\nu\|_{M^{\frac d{d-2}}}+\|\nabla E^\nu\|_{M^{\frac d{d-1}}}\)\xym{|f|_1\ar[r]_{\ \nu\to0}&0},  
\end{equation}where for a function $v:\rr^d\to\rr$ we set $v^\nu(x):=v(x+\nu)-v(x),$ $x\in\rr^d,$ $E(x)=\oo_d|x|^{2-d},$ $x\in\rr^d$, with $\oo_d$ $=$ the volume of the unit ball and $\|\cdot\|_{M^p},$ $p>1,$ the norm of the Marcinkievicz space $($see {\rm\cite{2}} and the references therein$)$.
	\end{itemize}\end{lemma}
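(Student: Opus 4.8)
The plan is to establish each of the four items by leveraging the approximation structure already set up. For (i), I would note that by construction $J^\vp_\lbb(f)$ solves $u_\lbb + \lbb (A_0)_\vp u_\lbb = f$ in $\cald'(\rrd)$, so only uniqueness in $L^1\cap L^\9$ needs proof. Here I would invoke the weak uniqueness result of \cite{3a} (Theorem 2.1 there, already cited above): since $\beta_\vp = \beta + \vp I$ is nondegenerate and Lipschitz on bounded sets, the resolvent equation at level $\vp$ is a stationary NFPE with nondegenerate diffusion, and two $L^1\cap L^\9$ solutions differing would contradict the accretivity/uniqueness for the regularized operator. The threshold $\lbb_0$ comes from \eqref{e1.12a}, ensuring $J^\vp_\lbb$ maps $L^1\cap L^\9$ into itself.

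For (ii), since $S_\vp(t)a_\vp = a_\vp$ for all $t\ge0$ by \eqref{e211} and $S_\vp(t) = \lim_{n\to\9}(I+\tfrac tn A_\vp)^{-n}$, one expects $J^\vp_\lbb a_\vp = a_\vp$. I would argue directly: by \eqref{2.8'}, $a_\vp$ satisfies $-\Delta\beta_\vp(a_\vp) + \divv(Db(a_\vp)a_\vp) = 0$ in $\cald'(\rrd)$, i.e. $(A_0)_\vp a_\vp = 0$; hence $a_\vp + \lbb(A_0)_\vp a_\vp = a_\vp$, so $a_\vp$ is \emph{a} solution of the resolvent equation with right-hand side $a_\vp$. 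Since $a_\vp \in L^1\cap L^\9$, part (i) gives that this solution is unique for $\lbb \le \lbb_0$, whence $J^\vp_\lbb a_\vp = a_\vp$. For (iii), the convergence $J^\vp_\lbb f \to J_\lbb f$ in $L^1$ is essentially the statement that the regularized resolvents converge to the resolvent of $A$; this should follow from the $L^1$-contraction estimate \eqref{e1.7} (uniform in $\vp$), a uniform a priori bound, and the fact that $\beta_\vp \to \beta$ locally uniformly, by passing to the limit in the distributional equation \eqref{e2.14} — first for $f\in L^1\cap L^\9$ using (i), then extending to all $f\in L^1$ by the density and the equi-Lipschitz property. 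I would cite the relevant convergence lemma from \cite{2} if available, or reproduce the compactness argument: the $\beta(J^\vp_\lbb f)$ are bounded in a fractional Sobolev / Marcinkiewicz norm uniformly in $\vp$ (which is exactly the point of (iv)), giving local strong $L^1$ compactness, enough to pass to the limit.

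The substance is in (iv), and I expect the translation estimate \eqref{e2.16} to be the main obstacle. The strategy is the standard one for NFPEs: write the resolvent equation for $v := J_\lbb f$ as $v - \lbb \Delta\beta(v) = f - \lbb\,\divv(Db(v)v)$, so that $\beta(v) = \lbb^{-1}\,\caln * \bigl(f - v - \lbb\,\divv(Db(v)v)\bigr)$ where $\caln = c\,E$ is the Newtonian potential (using $(-\Delta)^{-1}$ representation, legitimate since $d\ge3$ and $E(x)=\oo_d|x|^{2-d}$). One then estimates each term's convolution in $L^q(K)$ for $q<\tfrac{d}{d-1}$: the terms $\caln*(f-v)$ are controlled by $|f|_1 + |v|_1 \le 2|f|_1$ via the weak-type (Marcinkiewicz $M^{d/(d-2)}$) bound on $E$ and Young's inequality on a compact set; the term $\caln*\divv(Db(v)v) = (\nabla\caln)*(Db(v)v)$ is controlled using $\|\nabla E\|_{M^{d/(d-1)}}$ together with $|Db(v)v|_1 \le |D|_\9 |b|_\9 |v|_1 \le |D|_\9|b|_\9|f|_1$. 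This yields \eqref{e2.15}. For \eqref{e2.16} one repeats the computation after applying the finite-difference operator $(\cdot)^\nu$: since convolution commutes with translation, $\beta(v)^\nu = \lbb^{-1}\bigl(E^\nu * (f-v) + \nabla E^\nu * (Db(v)v)\cdot(\text{const})\bigr)$ up to constants, and the $M^p$ norms of $E^\nu$ and $\nabla E^\nu$ tend to $0$ as $\nu\to0$ by dominated convergence / scaling of the Marcinkiewicz norm, giving the stated vanishing factor; the constant $C$ depends only on $K, q, \lbb, |D|_\9, |b|_\9$ as claimed. The delicate points are: (a) justifying the potential representation rigorously given that $v$ is only $L^1$ (handle via the approximations $J^\vp_\lbb$ of part (iii), where $v_\vp \in L^1\cap L^\9$ and everything is classical, then pass to the limit using the uniform bounds just derived — this is why (iv) is stated for $J_\lbb$ but proved through $\vp$); and (b) keeping all constants uniform in $\vp$ so the limit survives.
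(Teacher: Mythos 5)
The essential gap is in part (i). Uniqueness of $L^1\cap L^\9$ solutions of the resolvent equation \eqref{e2.14} does not follow from either of the two sources you invoke. The $m$-accretivity established in Section \ref{s2a} concerns only the restriction $A_\vp=(A_0)_{\vp\uparrow J^\vp_\lbb(L^1)}$, i.e.\ it controls the selected right inverse $J^\vp_\lbb$ on its own range; it says nothing about a second distributional solution of \eqref{e2.14} that may lie outside $J^\vp_\lbb(L^1)$ --- indeed the paper stresses that for the unregularized operator the solution of \eqref{e1.10a} ``in general might not be unique'', so this is exactly the point that must be proved, not assumed. Likewise, Theorem 2.1 of \cite{3a} is a uniqueness theorem for narrowly continuous weak solutions of the \emph{parabolic} equation; it does not apply to the elliptic resolvent equation. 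The paper's actual argument is a direct $H^{-1}$ energy estimate: for two solutions $u_\lbb,\wt u_\lbb$ one pairs the difference equation with $u:=u_\lbb-\wt u_\lbb$ in the $H^{-1}$ inner product, uses $\beta_\vp=\beta+\vp I$ to extract the coercive term $\vp\lbb|u|^2_2$, bounds the remaining terms by $\lbb(\beta_M+C|D|_\9 b_M)|u|_2|u|_{-1}$ via the a priori bound $|u_\lbb|_\9\le M$ from \cite[Lemma 3.1, formula (3.36)]{2}, and concludes $|u|^2_{-1}\le\lbb C_\vp|u|^2_{-1}$ by Young's inequality, hence $u=0$ for $\lbb\le\lbb_0$. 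This also shows that $\lbb_0$ arises from this smallness condition (and from the threshold $\wt\lbb_0$ in the $L^\9$ bound), not from \eqref{e1.12a} as you suggest. You correctly identified that the nondegeneracy of $\beta_\vp$ is the key, but you did not supply the estimate that uses it, and since (ii) rests entirely on the uniqueness in (i), the gap propagates there.

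The remaining parts are essentially on the paper's track. Your (ii) is verbatim the paper's argument, modulo the missing (i). For (iii) your compactness-plus-identification scheme matches the paper's, but note that $L^1_{\rm loc}$ convergence must be upgraded to $L^1(\rr^d)$ convergence; the paper does this by multiplying \eqref{e2.14} by $\wt\Phi e^{-\nu\wt\Phi}\calx_\delta(\wt\beta_\vp(u_\vp))$ to get a uniform weighted bound $\|u_\vp\|_*\le\|f\|_*+C\lbb(|\Delta\wt\Phi|_\9+|D|_\9|\nabla\wt\Phi|_\9)|f|_1$ and then applies Lemma A.1; you should make this tightness step explicit. For (iv) your Newtonian-potential derivation reproduces the content of \cite[Lemma 3.1]{2}, which the paper simply cites at the level of $J^\vp_\lbb$ before passing to the limit $\vp\to0$ via (iii), Riesz--Kolmogorov, the maximal monotonicity of $u\mapsto\beta(u)$ to identify the limit as $\beta(J_\lbb f)$, and Fatou's lemma twice (once to remove $\vp$, once to pass from $L^1\cap L^\9$ to $L^1$); your plan to work at the $\vp$-level and keep constants uniform is the right one.
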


\n{\it Proof.} See the Appendix.\hfill$\Box$

\begin{theorem}\label{t22} Suppose $a:=\lim\limits_{\vp\to\9} a_\vp$ exists in $L^1$. Then $J_\lbb(a)=a$, for all $\lbb\in(0,\lbb_0]$ with $\lbb_0$ as in Lemma {\rm\ref{l22} (i)}. In particular, $a\in D(A)$ and $S(t)a=a$, for all $t\ge0$.\end{theorem}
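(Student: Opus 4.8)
The plan is to pass to the limit $\vp\to 0$ in the fixed-point identity \eqref{e211}, using the convergence of resolvents established in Lemma \ref{l22}(iii). First I would note that by \eqref{e211} we have $S_\vp(t)a_\vp=a_\vp$ for all $t\ge0$, and since $S_\vp$ is the nonlinear semigroup generated by $A_\vp$ via the Crandall--Liggett formula, the exponential formula \eqref{e1.14a} applied to $A_\vp$ gives $S_\vp(t)a_\vp=\lim_{n\to\9}(I+\tfrac tn A_\vp)^{-n}a_\vp$ in $L^1$. But $(I+\lbb A_\vp)^{-1}=J^\vp_\lbb$, so iterating \eqref{e211} rewritten via the resolvent (i.e. $J^\vp_\lbb a_\vp=a_\vp$ for $\lbb\in(0,\lbb_0]$) shows the approximants are all equal to $a_\vp$; in fact it is cleaner to observe directly that $a_\vp$ is a stationary point of $S_\vp$, hence $J^\vp_\lbb a_\vp=a_\vp$ for all $\lbb>0$ (a fixed point of the semigroup solves the resolvent equation for every $\lbb$). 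This already appears as part (ii) of Lemma \ref{l22}, so I would simply invoke $J^\vp_\lbb a_\vp=a_\vp$ for $\lbb\in(0,\lbb_0]$.

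Next, fix $\lbb\in(0,\lbb_0]$ and write
$$|J_\lbb(a)-a|_1\le |J_\lbb(a)-J_\lbb(a_\vp)|_1+|J_\lbb(a_\vp)-J^\vp_\lbb(a_\vp)|_1+|J^\vp_\lbb(a_\vp)-a|_1.$$
The first term is $\le|a-a_\vp|_1$ by the contraction property \eqref{e1.7} of $J_\lbb$, which tends to $0$ by hypothesis. The third term equals $|a_\vp-a|_1$ by Lemma \ref{l22}(ii), which tends to $0$. For the middle term, I would combine the nonexpansivity of $J^\vp_\lbb$ (which satisfies the same estimate \eqref{e1.7}) with Lemma \ref{l22}(iii): indeed
$$|J_\lbb(a_\vp)-J^\vp_\lbb(a_\vp)|_1\le|J_\lbb(a_\vp)-J_\lbb(a)|_1+|J_\lbb(a)-J^\vp_\lbb(a)|_1+|J^\vp_\lbb(a)-J^\vp_\lbb(a_\vp)|_1\le 2|a_\vp-a|_1+|J_\lbb(a)-J^\vp_\lbb(a)|_1,$$
and here $|J_\lbb(a)-J^\vp_\lbb(a)|_1\to0$ by Lemma \ref{l22}(iii) applied to $f=a$. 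Collecting all pieces gives $|J_\lbb(a)-a|_1=0$, i.e. $J_\lbb(a)=a$ for every $\lbb\in(0,\lbb_0]$.

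Finally I would deduce the conclusions. Since $J_\lbb=(I+\lbb A)^{-1}$ with $D(A)=J_\lbb(L^1)$, the identity $J_\lbb(a)=a$ gives $a\in D(A)$ and $Aa=0$, i.e. $a$ is a zero of $A$. A zero of an $m$-accretive operator is a stationary point of the generated semigroup: iterating $(I+\tfrac tn A)^{-1}a=a$ and using the exponential formula \eqref{e1.14a}, $S(t)a=\lim_{n\to\9}(I+\tfrac tn A)^{-n}a=a$ for all $t\ge0$.

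The main obstacle is really localized in Lemma \ref{l22}(iii) — the strong $L^1$-convergence $J^\vp_\lbb f\to J_\lbb f$ — but since that lemma is already stated (with proof deferred to the Appendix), for the present theorem the argument is just a careful triangle-inequality chase exploiting that both $J_\lbb$ and $J^\vp_\lbb$ are $L^1$-contractions. The only subtlety to be careful about is that Lemma \ref{l22}(ii) and (iii) hold for $\lbb\in(0,\lbb_0]$ rather than all $\lbb>0$, but this range suffices since $J_{\lbb}(a)=a$ for one $\lbb>0$ already forces $a\in D(A)$ with $Aa\ni 0$, and accretivity then propagates the fixed-point property of $a$ to $(I+tA/n)^{-1}$ for all $t,n$.
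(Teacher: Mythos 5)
Your proof is correct and follows essentially the same route as the paper: a triangle-inequality decomposition of $|J_\lbb a-a|_1$ combining Lemma \ref{l22}(ii) ($J^\vp_\lbb a_\vp=a_\vp$), Lemma \ref{l22}(iii) (resolvent convergence), and the $L^1$-contraction property of the resolvents, followed by the exponential formula \eqref{e1.14a} for the last assertion. The paper's splitting is slightly more economical (it inserts $J^\vp_\lbb a$ rather than $J_\lbb a_\vp$, avoiding your extra intermediate triangle inequality), but the argument is the same.
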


\n{\it Proof.} We have by Lemma \ref{l22} (ii), for $\lbb\in(0,\lbb_0]$, 
$$|J_\lbb a-a|_{1}\le|J_\lbb a-J^\vp_\lbb a|_{1}+|J^\vp_\lbb a-J^\vp_\lbb a_\vp|_{1}+|a_\vp-a|_{1}.$$Now, the assertion follows by Lemma \ref{l22} (iii), since $J^\vp_\lbb$ is a Lipschitz contraction for all $\vp>0$, $\lbb>0$. The last part of the assertion now follows by \eqref{e1.9} and \eqref{e1.14a}.\hfill$\Box$\bk

In Theorem \ref{t2.2a} below, we shall show that under a mild condition in addition to Hypotheses (i)-(iii) we indeed have that $a:=\lim\limits_{\vp\to0} a_\vp$ exists in $L^1$.

\section{The main results}\label{s2}
	\setcounter{equation}{0}
	
	Theorem \ref{t1} below is the main result of this work which will be proved in Section \ref{s3}.
	
\begin{theorem}\label{t1} Assume that Hypotheses {\rm(i)-(iii)} hold and let 	$\eta>0$ be arbitrary but fixed.
Let $u_0\in\calm_\eta\cap\calp\cap C$. Then, $\oo(u_0)\subset\calm_\eta\cap\calp\cap C$ is nonempty and
for all $t\ge0,$ $\oo(u_0)$ is compact in $L^1$, $\oo(u_0)=\ov{\{S(t)u_0\mid t\ge0\}}^{L^1},$ 
 and invariant under $S(t)$.
Moreover, $S(t)$  is, for every $t\ge0$,  an isometry on $\oo(u_0)$ and it is a homeomorphism from $\oo(u_0)$ onto itself for each $t\ge0$.  If~$a\in\calm_\eta\cap\calp\cap C$ is such~that 	
\begin{equation}
\label{e2.2}S(t)a=a,\ \ \ff\,t\ge0,
\end{equation} then $\oo(u_0)\subset  \{y\in\calm_\eta\cap\calp\cap C;\ |y-a|_1=r\}$, for some $0\le r\le|u_0-a|_1.$
\end{theorem}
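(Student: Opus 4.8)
The plan is to apply the La~Salle invariance principle for the nonlinear contraction semigroup $S(t)$, whose only substantial prerequisite is relative compactness of the orbit $\gamma(u_0)=\{S(t)u_0;\ t\ge0\}$ in $L^1$; everything else follows by soft arguments from \eqref{e1.11}--\eqref{e1.18b}. I would obtain compactness in two moves. \emph{Tightness:} working with the approximations $\beta_{\vp,M}$ of Section~\ref{s2a} and passing to the limit, one tests NFPE~\eqref{e1.1} against $\Phi$; since $0\le\beta(r)\le\mu_2 r$ for $r\ge0$, $b\ge b_0$, $D=-\nabla\Phi$, and $\mu_2(\Delta\Phi)^+\le b_0|\nabla\Phi|^2$ a.e.\ (which is exactly \eqref{e1.3}), this yields $\frac d{dt}\|S(t)u_0\|\le0$, hence $\|S(t)u_0\|\le\|u_0\|\le\eta$ for all $t\ge0$; as $\Phi\to\9$ at infinity, the family $\{S(t)u_0\}$ is then tight. \emph{Local $L^1$-compactness:} Lemma~\ref{l22}(iv) bounds $\beta(J_\lbb f)$ in $L^q(K)$ and controls its spatial translations, uniformly for $f$ in $L^1$-bounded sets and $K$ compact, $q\in(1,\frac d{d-1})$; iterating the resolvent and using the exponential formula~\eqref{e1.14a} (again along the $\vp,M$-approximations, on which the mass of a probability density is conserved) transfers these bounds to $\{\beta(S(t)u_0);\ t\ge0\}$. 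The coercivity $\mu_1\min\{|r|^\nu,|r|\}\le|\beta(r)|$ together with $\nu>\frac{d-1}d$ permits a choice of $q\in(1,\frac d{d-1})$ with $q\nu>1$, so the $L^q_{\rm loc}$-bound on $\beta(S(t)u_0)$ upgrades to a uniform $L^{q\nu}_{\rm loc}$-bound on $S(t)u_0$, i.e.\ equi-integrability on bounded sets. Kolmogorov--Riesz then gives precompactness of $\{\beta(S(t)u_0)\}$ in $L^1_{\rm loc}$; extracting an a.e.\ convergent subsequence, composing with the continuous inverse $\beta^{-1}$, and invoking tightness together with this equi-integrability, Vitali's theorem produces $L^1$-convergence. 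I expect this transfer of the resolvent/approximation estimates to uniform-in-$t$ estimates for $S(t)$ to be the main obstacle; the remainder is standard.

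Once $\gamma(u_0)$ is relatively compact, $\oo(u_0)=\bigcap_{s\ge0}\ov{\bigcup_{t\ge s}S(t)u_0}$ is a decreasing intersection of non-empty compacts, hence non-empty and compact, and $S(t)u_0$ approaches $\oo(u_0)$ in $L^1$ as $t\to\9$. If $w\in\oo(u_0)$, write $w=\lim_n S(t_n)u_0$ in $L^1$; since $S(t)u_0\in\calp\cap C$ by~\eqref{e1.12} and $\calp$, $C=\ov{D(A)}$ are $L^1$-closed, $w\in\calp\cap C$, while Fatou applied along an a.e.\ convergent subsequence together with $\|S(t_n)u_0\|\le\eta$ give $\|w\|\le\eta$; thus $\oo(u_0)\subset\calm_\eta\cap\calp\cap C$. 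Invariance $S(t)\oo(u_0)=\oo(u_0)$ is the standard two-inclusion argument: ``$\subset$'' uses continuity of $S(t)$, and ``$\supset$'' follows by writing $w=\lim_n S(t_n)u_0$, extracting a subsequential $L^1$-limit $v$ of $S(t_n-t)u_0$ (relative compactness), and observing $v\in\oo(u_0)$ and $S(t)v=w$ via~\eqref{e1.18b}. Together with the defining formula~\eqref{e1.13}, this gives the stated closure description of $\oo(u_0)$.

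For the isometry property, fix $t\ge0$: $S(t)$ restricted to the compact set $\oo(u_0)$ is non-expansive by~\eqref{e1.11} and, by the invariance above, surjective, and a surjective non-expansive self-map of a compact metric space is necessarily an isometry --- if it strictly decreased some distance, iterating it and using surjectivity would exhibit infinitely many points of $\oo(u_0)$ pairwise separated by a fixed positive amount, contradicting compactness. Its inverse is then also an isometry, so $S(t)$ is a homeomorphism of $\oo(u_0)$ onto itself. Finally, if $a\in\calm_\eta\cap\calp\cap C$ satisfies $S(t)a=a$ for all $t\ge0$, then $t\mapsto|S(t)u_0-a|_1=|S(t)u_0-S(t)a|_1$ is non-increasing by~\eqref{e1.11} and~\eqref{e1.18b}, hence converges to some $r$ with $0\le r\le|u_0-a|_1$; passing to the limit along any sequence $t_n\to\9$ with $S(t_n)u_0\to w\in\oo(u_0)$ yields $|w-a|_1=r$, i.e.\ $\oo(u_0)\subset\{y\in\calm_\eta\cap\calp\cap C;\ |y-a|_1=r\}$.
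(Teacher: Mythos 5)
Your overall strategy coincides with the paper's: establish relative compactness of the orbit $\gamma(u_0)$ and then invoke the La~Salle invariance principle for nonlinear contraction semigroups. The soft consequences you derive --- nonemptiness and compactness of $\oo(u_0)$, invariance, the isometry property via ``a surjective non-expansive self-map of a compact metric space is an isometry'', and the sphere statement from the monotone convergence of $t\mapsto|S(t)u_0-a|_1$ --- are all correct and are exactly what the paper imports wholesale from Theorem~1 of Dafermos--Slemrod \cite{3}. Your tightness step is essentially the paper's Lemma~\ref{l3.1}, obtained there from the resolvent estimate $\|J^\vp_\lbb y\|\le\|y\|$ of \cite[Lemma 6.2]{2} rather than by testing the PDE against $\Phi$; that difference is cosmetic.

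The genuine gap is in your second move. You propose to transfer the bounds of Lemma~\ref{l22}(iv) from the resolvent to the semigroup by ``iterating the resolvent and using the exponential formula~\eqref{e1.14a}'', and you correctly flag this as the main obstacle --- but the route you sketch does not close. The constant $C$ in \eqref{e2.15}--\eqref{e2.16} depends on $\lbb$ and degenerates as $\lbb\to0$, so applying the estimate to the final step of the Euler scheme \eqref{e1.18a} with step $h=t/n$ produces bounds that blow up as $n\to\9$, and no uniform-in-$t$ bound on $\beta(S(t)u_0)$ in $L^q_{\rm loc}$ follows. The paper circumvents this entirely: it uses Lemma~\ref{l22}(iv) only to prove that the resolvent $(I+\lbb\wt A)\1$ at a \emph{fixed} $\lbb\in(0,\lbb_0]$ is compact on $K=\calm_\eta\cap\calp\cap C$ (Lemma~\ref{l1}, via Riesz--Kolmogorov, the equi-integrability argument from $\nu q>1$ that you also use, and Lemma~A.1 for tightness), and then invokes Theorem~3 of \cite{3}: compactness of the resolvent together with boundedness of the orbit already implies precompactness of the orbit (Lemma~\ref{l3.3}). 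The transfer you are missing is hidden in that theorem, whose proof rests on the estimate $|S(t)u_0-J_\lbb S(t)u_0|_1\le\lbb\,|\wt Au_0|_1$ for $u_0\in D(\wt A)$ (using that $t\mapsto|\wt AS(t)u_0|_1$ is nonincreasing), so that the orbit lies within $\lbb|\wt Au_0|_1$ of the compact set $J_\lbb(\gamma(u_0))$ and is therefore totally bounded; one then passes to general $u_0\in\ov{D(\wt A)}$ by density and the contraction property. To repair your argument you should either quote that theorem, as the paper does, or reproduce this approximation argument; the ``iterate the exponential formula'' route should be abandoned. (A minor further point: the identity $\oo(u_0)=\ov{\{S(t)u_0\mid t\ge0\}}^{L^1}$ is not what your two-inclusion argument yields; what comes out of \cite{3} is $\oo(u_0)=\ov{\gamma(v)}$ for $v\in\oo(u_0)$.)
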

In particular, it follows by Theorem \ref{t1} that $S(t),\ t\ge0,$ is {\it a continuous group} on $\oo(u_0)$.
Moreover, {\it the function $t\to S(t)v$ is equi-almost periodic} in $L^1$ for each $v\in\oo(u_0)$,  i.e., for every $\vp>0$ there exists $\ell_\vp>0$ such that for every interval $I$ in $\rr$ of length $\ell_\vp$ there exists $\tau\in I$ such that
$$|S(t+\tau)y-S(t)y|_1\le\vp,\ \ff t\in\rr,\ y\in\oo(u_0).$$Furthermore, 
$${\rm dist}_H(\oo(u_0),\oo(\bar u_0))\le|u_0-\bar u_0|_1,\ \ff\,u_0,\bar u_0\in\calm_\eta\cap\calp\cap C,$$where ${\rm dist}_H$ is the Hausdorff distance. 

Since the main interest is to get solutions $u(t)=S(t)u_0$ to \eqref{e1.1} in the class  $\calp$,  the initial data $u_0$ was taken in the same set $\calp$ which, by virtue of \eqref{e1.12}, implies that $u(t)\in\calp$, $\ff\,t\ge0.$ As seen below, the set $\calm_\eta$  is still invariant under the semigroup $S(t)$, but we note that this choice for initial data  (that is, $u_0\in\calm_\eta)$ was taken for technical reasons which will become clear in the proof of Theorem \ref{t1}.

In order to apply Theorem \ref{t1}, a nontrivial problem is the existence of a fixed point $a$ for the semigroup $S(t)$.

This problem is quite delicate and will be treated in Theorem \ref{t2.2a} below.
To this purpose, consider  the function $g:(0,\9)\to\rr$  defined~by
\begin{equation}
\label{e2.4a}g(r)=\int^r_1\frac{\beta'(s)}{sb(s)}\ ds,\ \ \ff\,r>0.\end{equation}We have

\begin{theorem}\label{t2.2a} Assume that, besides Hypotheses {\rm(i), (ii), (iii)},  the following conditions hold:\newpage
	
\begin{eqnarray}
\lim_{r\to+\9}g(r)=+\9,&if&\nu\in\mbox{$\left(1-\frac 1d,1\right]$};\label{e2.5}\\
\lim_{r\to0}g(r)=-\9,&if&\nu\in(1,\9).\label{e2.6}
\end{eqnarray}
\end{theorem}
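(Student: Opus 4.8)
\medskip
\noindent\emph{Proof strategy.}
We want to show that $a:=\lim_{\vp\to0}a_\vp$ exists in $L^1$, that $a\in\calp\cap\calm\cap L^\9$ and solves \eqref{e1.15}, and hence, by Theorem \ref{t22}, that $a\in D(A)\subset C$ with $Aa=0$ and $S(t)a=a$ for all $t\ge0$; Theorem \ref{t1} then applies to the stationary point $a$, and, by the correspondence with \eqref{e1.4a} recalled in the Introduction (see \cite{2}), $a(x)\,dx$ is an invariant measure for the associated McKean--Vlasov equation. The plan is to identify the limit explicitly and to prove convergence by soft compactness.

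The candidate is $a(x)=g^{-1}(\mu_0-\Phi(x))$, with $g$ as in \eqref{e2.4a} and $\mu_0\in\rr$ the unique number for which $\int_{\rrd}g^{-1}(\mu_0-\Phi)\,dx=1$. The argument rests on three facts about $g_\vp(r):=\int_1^r\frac{\beta'(s)+\vp}{sb(s)}\,ds$ (so $g_0=g$, and $g_\vp$ is the profile function implicit in $a_{\vp}$): \textbf{(1)} each $g_\vp$, $\vp>0$, is a continuous strictly increasing bijection of $(0,\9)$ onto $\rr$ (because the term $\vp\int_1^r(sb(s))^{-1}\,ds$ alone tends to $+\9$, resp.\ $-\9$, as $r\to\9$, resp.\ $0$), and so is $g=g_0$; this last claim is exactly where \eqref{e2.5}--\eqref{e2.6} are used, the remaining boundary behaviour ($g\to-\9$ at $0^+$ when $\nu\le1$, and $g\to+\9$ at $+\9$ when $\nu>1$) being forced by \eqref{e1.2a} via an integration by parts in $\int_r^1\beta'(s)s^{-1}\,ds$; consequently $g^{-1}\colon\rr\to(0,\9)$ is well defined. \textbf{(2)} The same integration by parts, keeping only $0<\beta(s)\le\mu_2 s$ and $b\ge b_0$, yields a tail estimate uniform in $\vp$:
$$g_\vp^{-1}(t)\le C_1 e^{-c_2|t|}\qquad\text{for all }\vp\in[0,1]\text{ and }t\le-C_0,$$
with $C_0,C_1,c_2>0$ depending only on $b_0$, $\mu_2$, $\beta(1)$. \textbf{(3)} Since $|a_\vp|_\9\le M_0(\vp)$ and $\beta_{\vp,M_0}$ coincides with $\beta+\vp I$ on $[-M_0,M_0]$, one has $a_\vp(x)=g_\vp^{-1}(\mu_\vp-\Phi(x))$ for every $x$, where $\mu_\vp:=\mu_{\vp,M_0}$; by \eqref{e28}, $\mu_\vp\le\bar\mu<\9$ uniformly in $\vp$; and, since $\Phi\ge1$ with $g_\vp^{-1}\le g^{-1}$ on $[0,\9)$ and $g_\vp^{-1}\le 1$ on $(-\9,0)$, one also gets $|a_\vp|_\9\le L_0<\9$ uniformly in $\vp$.

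The argument then runs in four steps. \emph{(a) Lower bound on $\mu_\vp$.} If $\mu_{\vp_k}\to-\9$ along some $\vp_k\downarrow0$, then for $k$ large $\mu_{\vp_k}-\Phi(x)\le-C_0$ for every $x$, so by \textbf{(2)} and $e^{-c_2\Phi}\le C\Phi^{-m}\in L^1$ (from \eqref{e1.2}) we get $1=\int_{\rrd}a_{\vp_k}\,dx\le C_1 e^{c_2\mu_{\vp_k}}\int_{\rrd}e^{-c_2\Phi}\,dx\to0$, a contradiction; hence $\mu_\vp\ge\underline\mu>-\9$ for all $\vp$. \emph{(b) Uniform integrable domination.} For $R$ large, $\{\,|x|>R\,\}\subset\{\Phi>\bar\mu+C_0\}$, on which \textbf{(2)}, \textbf{(3)} give $a_\vp\le C' \Phi^{-m}$; on the finite-measure set $\{\Phi\le\bar\mu+C_0\}$ one has $a_\vp\le L_0$; hence $a_\vp\le h$ for a fixed $h\in L^1(\rrd)$ and all $\vp\in(0,1]$. \emph{(c) Passage to the limit.} As $g_\vp\to g$ locally uniformly on $(0,\9)$ (their difference being $\vp\int_1^r(sb(s))^{-1}\,ds$), $g_\vp^{-1}\to g^{-1}$ locally uniformly on $\rr$; extracting from any $\vp_k\downarrow0$ a further subsequence with $\mu_{\vp_k}\to\mu_*\in[\underline\mu,\bar\mu]$ (possible by \emph{(a)} and \eqref{e28}), we get $a_{\vp_k}(x)=g_{\vp_k}^{-1}(\mu_{\vp_k}-\Phi(x))\to g^{-1}(\mu_*-\Phi(x))$ for every $x$, whence, by \emph{(b)} and dominated convergence, $a_{\vp_k}\to g^{-1}(\mu_*-\Phi(\cdot))$ in $L^1$, with $\int_{\rrd}g^{-1}(\mu_*-\Phi)\,dx=1$. \emph{(d) Uniqueness of the limit.} The map $F(\mu):=\int_{\rrd}g^{-1}(\mu-\Phi(x))\,dx$ is continuous and strictly increasing, with $F(\mu)\to0$ as $\mu\to-\9$ (by \textbf{(2)} and dominated convergence) and $F(\mu)\ge|\{\Phi\le\mu\}|\to\9$ as $\mu\to+\9$ (since $g^{-1}(\mu-\Phi)\ge g^{-1}(0)=1$ there); so there is a unique $\mu_0$ with $F(\mu_0)=1$, forcing $\mu_*=\mu_0$. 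Since the subsequential limit $a:=g^{-1}(\mu_0-\Phi(\cdot))$ is the same for every subsequence, $a_\vp\to a$ in $L^1$ as $\vp\to0$. Finally $a\ge0$, $\int_{\rrd}a\,dx=1$, $|a|_\9\le g^{-1}(\mu_0-1)<\9$, and $\Phi a\le C\Phi e^{-c_2\Phi}\le C'\Phi^{-m}$ off a set of finite measure, so $a\in\calp\cap\calm\cap L^\9$; by Theorem \ref{t22}, $a\in D(A)$, $Aa=0$ — i.e.\ $a$ solves \eqref{e1.15} — and $S(t)a=a$ for all $t\ge0$.

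The main obstacle is the $\vp$-uniform exponential tail estimate \textbf{(2)}: it is the one genuinely quantitative ingredient — all else being compactness and monotonicity — and it is what converts ``$\Phi\to\9$'' into tightness of $\{a_\vp\}$, thereby ruling out any loss of mass as $\vp\to0$. It works precisely because \eqref{e1.2a} bounds $\beta$ from above by $\mu_2 I$; moreover the very same computation, combined with \eqref{e2.5}--\eqref{e2.6}, is what makes $g$ a bijection of $(0,\9)$ onto $\rr$, so that the limiting profile $g^{-1}(\mu_0-\Phi)$ is meaningful. A minor technical point, already handled in \textbf{(3)}, is that the truncation level $M_0=M_0(\vp)$ in the definition of $a_\vp$ may blow up as $\vp\to0$, but is inoperative on the range of $a_\vp$, where $\beta_{\vp,M_0}$ reduces to $\beta+\vp I$.
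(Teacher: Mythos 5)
Your proof is correct and follows essentially the same route as the paper's: the explicit profile $a_\vp=g_\vp^{-1}(\mu_\vp-\Phi)$, the two-sided uniform bounds on $\mu_\vp$ (the lower one by contradiction via the exponential tail estimate for $g_\vp^{-1}$ from Lemma A.2 applied to $\beta_\vp$, uniformly in $\vp$), locally uniform convergence $g_\vp^{-1}\to g^{-1}$, and a tightness argument to upgrade pointwise to $L^1$ convergence, with the fixed-point property then delegated to Theorem \ref{t22}. The only (harmless) variations are that you conclude by dominated convergence with an explicit integrable majorant instead of the paper's Lemma A.1 (which uses the uniform $\Phi$-weighted $L^1$ bound), and that you make explicit the independence of the limit from the chosen subsequence via strict monotonicity of $\mu\mapsto\int_{\rr^d} g^{-1}(\mu-\Phi)\,dx$, a point the paper leaves implicit.
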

Let $a_\vp\in\calp\cap\calm\cap L^\9$ be as in the previous section (see \eqref{e210}, \eqref{e211}). Then
$a:=\lim\limits_{\vp\to0}a_\vp$ exists in $L^1$ and
$$a(x)=g\1(\mu-\Phi(x)),\ \ x\in\rrd,$$where $\mu\in\rr$ is the unique number such that
$$\int_{\rr^d}a\1(\mu-\Phi(x))dx=1.$$Furthermore,
\begin{equation}\label{e3.5z}
S(t)a=a,\ \ \ff t\ge0.\end{equation}

\n{\it Proof of Theorem \ref{t2.2a}.} The last part of the assertion follows by Theorem \ref{t22}. So, it remains to prove its first part. To this end, we first note that by \eqref{e2.5}, \eqref{e2.6}, Lemma A.2 implies that
$g:(0,\9)\to\rr$ is bijective, since $g$ is strictly increasing.

Furthermore, $g((0,1))\subset(-\9,0),$ $g([1,\9))\subset[0,\9)$, $g\in C^1((0,\9))$ and for its inverse
$$g\1:\rr\to(0,\9),$$we have $g\1\in C^1(\rr),$ $(g\1)'>0,$ $g\1([0,\9))\subset [1,\9)$, $g\1((-\9,0))\subset(0,1).$ Define (cf. \eqref{e23'} and \eqref{e25})
\begin{equation}\label{e3.6z}
g_\vp(r):=\int^r_1\frac{\beta'_\vp}{sb(s)}\ ds=\int^r_1\frac{\beta'(s)+\vp}{sb(s)}\ ds,\ \ r>0.
\end{equation}
Then $g_\vp$ and its inverse $g^{-1}_\vp$ have the same properties as $g,g\1$ above. Clearly,	
\begin{equation}
	\label{e3.7z}
\xym{g_\vp\ar[r]_{\vp\to0}&g}\mbox{ locally uniformly on }\rr,
\end{equation}
hence
\begin{equation}
	\label{e3.8z}
		\xym{g^{-1}_\vp\ar[r]_{\vp\to0}&g\1}\mbox{ locally uniformly on }\rr.
\end{equation}
We recall that (see \eqref{e24}-\eqref{e26} and \eqref{e210})
\begin{equation}
	\label{e3.9z}
	a_\vp(x)=g^{-1}_\vp(\mu_\vp-\Phi(x)),\ \ x\in\rr^d,
\end{equation} 
where $\mu_\vp\in\rr$ is the unique number such that 
\begin{equation}
	\label{e3.10z}
\int_\rrd g^{-1}_\vp(\mu_\vp-\Phi(x))dx=1.\end{equation}
Since $g^{-1}_\vp([0,\9))\subset[1,\9)$, by \eqref{e3.10z} we have
$$1\ge\int_{\{\Phi\le\mu_\vp\}}1\ dx,$$
so
$$\sup_{\vp\in(0,1]}\mu_\vp<\9.$$
Suppose there exist $\vp_n\in[0,1]$, $n\in\nn$, such that $\mu_{\vp_n}\to-\9$ as $n\to\9$. Then, by \eqref{e3.10z},
$$1=\lim_{n\to\9}\int_{\rr^d} g^{-1}_{\vp_n}(\mu_{\vp_n}-\Phi(x))dx.$$
But, by Lemma A.2 applied to $\beta_\vp$ instead of $\beta$, it follows by (A.4), (A.8)  and \eqref{e1.2} that the limit on the r.h.s. is equal to zero. This contradiction implies that
$$\inf_{\vp\in(0,1]}\mu_\vp>-\9,$$so  $\{\mu_\vp\mid\vp\in(0,1]\}$ is bounded, which implies that there exist $\vp_n\in(0,1]$, $n\in\nn$, such that $\lim\limits_{n\to\9}\vp_n=0$ and $\mu:=\lim\limits_{n\to\9}\mu_{\vp_n}$ exists in $\rr$.
 
 Furthermore, by (A.5) and (A.9) (again applied to $\beta_\vp$) we have
\begin{equation}
	\label{e3.11z}
	\sup_n\int_{\rr^d} g^{-1}_{\vp_n}(\mu_{\vp_n}-\Phi(x))\Phi(x)dx<\9,
\end{equation}and by \eqref{e3.8z} we have
\begin{equation}
	\label{e3.12z}
 \xym{g^{-1}_{\vp_n}(\mu_{\vp_n}-\Phi)\ar[r]_{n\to\9}& g\1(\mu-\Phi)}
\end{equation}uniformly on compact subsets on $\rrd$. Hence the first part of the assertion follows by Lemma A.1 in the Appendix.\hfill$\Box$

\begin{corollary}\label{c1} Assume that Hypotheses {\rm (i), (ii), (iii)} 	and   \eqref{e2.5}--\eqref{e2.6} are satisfied.    Then all the conclusions of Theorem {\rm\ref{t1}} hold.  In particular, for each
	\mbox{$u_0\in\calm_\eta\cap\calp\cap C$,} the $\oo$-limit set $\oo(u_0)$ lies in the set
$$\{y\in\calm_\eta \cap\calp\cap C;\ |y-a|_1=r\},$$
where $a\in\calm_\eta\cap\calp\cap C$ satisfies \eqref{e3.5z}   and $0\le r\le|u_0-a_{\mu^*}|_1.$
\end{corollary}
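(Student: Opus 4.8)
The plan is to quote Theorem~\ref{t1} essentially verbatim, the only extra ingredient being a stationary point of $S(t)$ lying in $\calm_\eta\cap\calp\cap C$, which is precisely what Theorem~\ref{t2.2a} supplies once the extra hypotheses \eqref{e2.5}--\eqref{e2.6} are imposed. So I would first invoke Theorem~\ref{t2.2a}: under (i)--(iii) and \eqref{e2.5}--\eqref{e2.6} the limit $a:=\lim_{\vp\to0}a_\vp$ exists in $L^1$, equals $g^{-1}(\mu-\Phi(\cdot))$ with $\mu$ normalizing the mass to $1$, and (through Theorem~\ref{t22}) satisfies $a\in D(A)$ and $S(t)a=a$ for all $t\ge0$.

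Next I would verify that this $a$ sits in the right set. Nonnegativity and unit mass pass to the $L^1$-limit of the $a_\vp\in\calp$, so $a\in\calp$; and $a\in D(A)\subset\ov{D(A)}=C$. For the weighted integrability, the uniform bound \eqref{e3.11z}, the locally uniform convergence \eqref{e3.12z}, and Fatou's lemma give $\|a\|=\int_{\rrd}\Phi a\,dx<\9$, i.e.\ $a\in\calm$; since $\eta$ is at our disposal and $\|a\|$ is a finite number, we may take $\eta\ge\|a\|$ so that $a\in\calm_\eta$ as well. (In any case $\oo(u_0)\subset\calm_\eta$ already holds by the first part of Theorem~\ref{t1}, so intersecting with the sphere found below would land inside $\calm_\eta$ regardless.)

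With $a\in\calm_\eta\cap\calp\cap C$ a fixed point of $S(t)$, Theorem~\ref{t1} applies to every $u_0\in\calm_\eta\cap\calp\cap C$ and delivers all the asserted conclusions: $\oo(u_0)$ is a nonempty $L^1$-compact subset of $\calm_\eta\cap\calp\cap C$, $\oo(u_0)=\ov{\{S(t)u_0\mid t\ge0\}}^{L^1}$, it is invariant under $S(t)$, each $S(t)$ is an isometric homeomorphism of $\oo(u_0)$ onto itself (hence $S$ a group there), and $\oo(u_0)\subset\{y\in\calm_\eta\cap\calp\cap C;\ |y-a|_1=r\}$ for some $0\le r\le|u_0-a|_1$. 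For completeness I would recall the reason for the last clause: $t\mapsto|S(t)u_0-a|_1=|S(t)u_0-S(t)a|_1$ is nonincreasing by the contraction property \eqref{e1.11}, hence converges to some $r\in[0,|u_0-a|_1]$, and any $y=\lim_nS(t_n)u_0\in\oo(u_0)$ then has $|y-a|_1=\lim_n|S(t_n)u_0-a|_1=r$. The only step carrying real content is the existence of $a$, which has already been established in Theorem~\ref{t2.2a}; so I do not anticipate any genuine obstacle here beyond this bookkeeping.
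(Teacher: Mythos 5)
Your proposal is correct and follows exactly the route the paper intends: the corollary is an immediate combination of Theorem~\ref{t2.2a} (which produces the stationary point $a$ with $S(t)a=a$) and Theorem~\ref{t1}, and the paper accordingly gives no separate proof. Your extra bookkeeping --- checking $a\in\calm\cap\calp\cap C$ via \eqref{e3.11z}, \eqref{e3.12z} and Fatou, noting that $\oo(u_0)\subset\calm_\eta$ already comes from Theorem~\ref{t1}, and recalling why $|y-a|_1$ is constant on $\oo(u_0)$ --- is consistent with the paper's arguments (and correctly reads the typo $a_{\mu^*}$ as $a$).
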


\n Of course, if the set $\oo(y_0)\cap\{a\in C;\ S(t)a=a,\,\ff t>0\}$ is nonempty, it follows by Corollary \ref{c1}  that $\oo(y_0)$ contains only one element (the equilibrium state~$a$) and so $\lim\limits_{t\to\9}S(t)y_0=a$ strongly in $L^1$.

 It should be said that Theorem \ref{t1} and Corollary \ref{c1} provide a weak form  of the $H$-theorem for NFPE \eqref{e1.1} in the degenerate case we consider here. Roughly speaking, it amounts to saying that $S(t)u_0\to\oo(u_0)$ in $L^1$ for $t\to\9.$

\begin{remark}
	\label{r1a}\rm One simple example of a function $\beta$ which satisfies all conditions of Theorem \ref{t2.2a} is the following:
	$$\beta(r)=\int^r_0\theta(s)ds,\ \ \ff\,r\ge0,$$where
	$$\theta(s)=\left\{
	\barr{lll}
	-\dd\frac1{\log s}&\mbox{ for }&0<s\le\delta<1,\vsp
	\zeta(s)&\mbox{ for }&s>\delta,\earr\right.$$
	and, for $r\in(-\9,0)$,
	$$\beta(x)=-\beta(-r),$$where $\delta>0$, $\zeta\in C^1[\delta,+\9)$, bounded, $\zeta\ge\zeta_0\in(0,\9)$, and $\zeta$ is such that $\theta\in C^1(0,\9)$.
	Then it is elementary to check that $\beta$  satisfies Hypothesis (i) with $\nu=2$. Furthermore, obviously $g(r)={\rm const}-\log|\log r|$ for $r\in(0,1]$. Hence \eqref{e2.6} also holds and Theorem \ref{t2.2a} applies.
	\end{remark}

\begin{remark}
	\label{r1}\rm Following \cite{1}, we can consider to associate with the dynamics $S(t)$ the Lyapunov function
\begin{equation}\label{e2.12a}
V(u)=\int_\rrd\sigma(u(x))dx+\int_\rrd\Phi(x)u(x)dx=S[u]+E[u],\ \ff\, u\in\calm\cap\calp,\end{equation}where
	$$\sigma(r)=-\int^r_0d\tau\int^1_\tau\frac{\beta'(s)}{sb(s)}\ ds,\ \ \ff\,r\ge0.$$The function $V$ is the free energy of the system,  $S[u]$ is the generalized entropy of the system and $E[u]$ is the internal energy. (In the special case $\beta(r)\equiv r,$ $S[u]$ is just the Boltzmann-Gibbs entropy.) Arguing as in \cite{1}, it follows that $t\to V(S(t)u_0)$ is nonincreasing on $[0,\9)$ and, since $V$ is lower-semicontinuous and positive on $K$, we have by Theorem \ref{t1} that
	$$V(u_\9)=\lim_{t\to\9}V(S(t)u_0)=\inf V,\ \ \ff u_\9\in\oo(u_0).$$\newpage\n Hence, $u_\9$ minimizes the free energy and, as mentioned earlier, this means that $u_\9$ is an equilibrium solution to \eqref{e1.1}. We note that, if $\nabla_x\beta(u(t,\cdot))\in L^2$, then we have 
	$$\frac d{dt}\,S[u(t)]=-\int_{\rr^d}J(u(t,x))\cdot\nabla_x\beta(u(t,x))dx,\ t>0.$$
In particular, this implies that,	  if the current of probability $J$ vanishes at the equilibrium solution $u_\9$, then the   system is in a state of thermodynamic equilibrium with zero entropy production in $u_\9$, that is,  it is reversible \cite{10}.
\end{remark}

In the nondegenerate case \eqref{e1.14},  $u_\9$ is the unique equilibrium state of the system and coincides with the stationary solution to \eqref{e1.1}. It should be said, however, that in our case the equilibrium solution $u_\9$ might not~be~unique.

If $u_0\in L^\9\cap \calm\cap\calp$, then condition \eqref{e1.2a} in Hypothesis (i) can be relaxed~to
$$\mu_1r^\nu\le\beta(r),\ \ \ff\,r>0,\eqno{\eqref{e1.2a}'}$$where $\mu_1>0$ and $\nu>\frac{d-1}d,\ d\ge3.$

\begin{remark}
	\label{r2.6}\rm If, in addition to (i)--(iii), one assumes that $\beta'(r)>\gamma>0$, $\ff r>0$, then  we have that $C=L^1$, hence $\calm_\eta\cap\calp\cap C=\calm_\eta\cap\calp$ and so Theorem \ref{t1} and Corollary \ref{c1} are true for all $u_0\in\calm_\eta\cap\calp.$
\end{remark}

\begin{remark}\label{r2.5} \rm Theorems \ref{t1}, \ref{t2.2a} can be rephrased in terms of the McKean-Vlasov equation \eqref{e1.4a}, that is, in terms of convergence of the time marginal laws of the corres\-ponding probabilistically weak solution $X=X(t)$ for~\mbox{$t\to\9$.}
\end{remark}

\section{Proof of Theorem \ref{t1}}\label{s3}
	\setcounter{equation}{0}

We shall prove Theorem \ref{t1} in three steps indicated by lemmas which follow. 	
	Let $\eta>0$   and let $K=\calm_\eta\cap\calp\cap C$. Clearly, $K$ is a closed and bounded set of $L^1$.
	
\begin{lemma}\label{l3.1}We have	
\begin{equation}
\label{e3.1}
\|(I+\lbb A)\1y\|\le\|y\|,\ \ \ff \,y\in\calm\cap\calp,\ \lbb>0,\end{equation}
and
\begin{equation}
\label{e3.2}
(I+\lbb A)\1(K)\subset\calm_\eta\cap\calp\cap D(A)\subset D(A)\cap K,\ \ff\lbb>0.\end{equation}In particular,
\begin{equation}
	\label{e4.2'}
	\|S(t)u_0\|\le u_0,\ \ \ff u_0\in C,\ t\ge0,\end{equation}and  $S(t)(K)\subset K,\ \ \ff\,t\ge0.$ 
\end{lemma}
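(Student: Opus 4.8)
The plan is to obtain everything from the weighted contraction \eqref{e3.1}, from which \eqref{e3.2}, \eqref{e4.2'} and the invariance of $K$ follow by soft arguments. Fix $\lambda>0$ and $y\in\calm\cap\calp$ and set $u:=(I+\lambda A)\1y=J_\lambda y$. By \eqref{e1.9a}, $u\in\calp$, so $u,y\ge0$, and since $\Phi\ge1$ we have $\|u\|=\int_{\rrd}\Phi u\,dx$ and $\|y\|=\int_{\rrd}\Phi y\,dx$. As $u\in D(A_0)$,
$$u-\lambda\Delta\beta(u)+\lambda\,\divv(Db(u)u)=y\quad\mbox{in }\cald'(\rrd),$$
and I would test this against $\Phi\chi_R$, where $\chi_R(x)=\chi(x/R)$, $\chi\in C^\9_c(\rrd)$, $0\le\chi\le1$, $\chi\equiv1$ on $\{|x|\le1\}$, $\mathrm{supp}\,\chi\subset\{|x|\le2\}$. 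Here $\nabla\Phi=-D\in L^\9$ makes $\Phi$ globally Lipschitz, hence $\Phi(x)\le C(1+|x|)$ and $\Phi\chi_R$ bounded with compact support; and \eqref{e1.3} with $|\nabla\Phi|=|D|\le|D|_\9$ gives $(\Delta\Phi)^+\le\frac{b_0}{\mu_2}|\nabla\Phi|^2\in L^\9$. Using these facts, the regularity of $\beta(J_\lambda y)$ from Lemma \ref{l22}(iv) and $A_0u,b(u)u\in L^1$, the integrations by parts below are legitimate; alternatively, one first reduces via \eqref{e1.12a} to $y\in L^1\cap L^\9$, where $u\in L^1\cap L^\9$ renders them elementary, and then treats general $y\in\calm\cap\calp$ through $y\wedge n\uparrow y$ and the $L^1$-lower semicontinuity of $\|\cdot\|$ (when $\lambda>\lambda_0$, outside the range of \eqref{e1.12a}, I would rely on the construction of $J_\lambda$ in \cite{2} for the regularity needed).

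Performing the test and using $D=-\nabla\Phi$ should give
$$\int_{\rrd}\Phi\chi_R\,u\,dx+\lambda\int_{\rrd}\chi_R\big(|\nabla\Phi|^2b(u)u-\beta(u)\Delta\Phi\big)\,dx=\int_{\rrd}\Phi\chi_R\,y\,dx+\lambda E_R,$$
where $E_R$ gathers the commutator terms carrying $\nabla\chi_R$ and $\Delta\chi_R$; since these are supported in $\{R\le|x|\le2R\}$, with $|\nabla\chi_R|\le C/R$, $|\Delta\chi_R|\le C/R^2$, $\Phi\le C(1+|x|)$, $|\beta(u)|\le\mu_2u$, $b\le|b|_\9$, I expect $|E_R|\le C\int_{\{|x|\ge R\}}u\,dx\to0$ as $R\to\9$. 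Since $\beta$ is increasing with $\beta(0)=0$ and \eqref{e1.2a} holds, $0\le\beta(u)\le\mu_2u$, and $b(u)\ge b_0$; dropping the nonnegative term $\beta(u)(\Delta\Phi)^-$ one gets, a.e.,
$$|\nabla\Phi|^2b(u)u-\beta(u)\Delta\Phi\ \ge\ b_0|\nabla\Phi|^2u-\beta(u)(\Delta\Phi)^+\ \ge\ b_0|\nabla\Phi|^2u-\mu_2u\cdot\tfrac{b_0}{\mu_2}|\nabla\Phi|^2=0.$$
Hence $\int_{\rrd}\Phi\chi_R\,u\,dx\le\|y\|+\lambda|E_R|$, and letting $R\to\9$ (monotone convergence, $\chi_R\uparrow1$) yields $\|u\|\le\|y\|$, i.e.\ \eqref{e3.1}; in particular $u\in\calm$. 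For $\lambda>\lambda_0$ the same conclusion also follows by reducing to small $\lambda$ through the resolvent identity.

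Then \eqref{e3.2} is immediate: for $y\in K=\calm_\eta\cap\calp\cap C$ one has $u=J_\lambda y\in D(A)$ by \eqref{e1.9}, $u\in\calp$ by \eqref{e1.9a}, and $\|u\|\le\|y\|\le\eta$ by \eqref{e3.1}, so $u\in\calm_\eta\cap\calp\cap D(A)\subset D(A)\cap K$, using $D(A)\subset C$. The set $K$ is closed in $L^1$: $\calp$ and $C$ are, and $\calm_\eta$ is because $\|\cdot\|$ is $L^1$-lower semicontinuous (Fatou, $\Phi\ge1$). Iterating \eqref{e3.2} gives $(I+\tfrac tn A)^{-n}u_0\in K$ for all $u_0\in K$, so by \eqref{e1.14a} $S(t)(K)\subset K$; iterating \eqref{e3.1} gives $\|(I+\tfrac tn A)^{-n}u_0\|\le\|u_0\|$ for $u_0\in\calm\cap\calp\cap C$, so \eqref{e1.14a} and the $L^1$-lower semicontinuity of $\|\cdot\|$ give \eqref{e4.2'} (trivial when $\|u_0\|=\9$; the case $u_0\in(\calm\cap C)\setminus\calp$ follows from the sign-free analogue of \eqref{e3.1}, proved the same way with $u$ replaced by $|u|$ via Kato's inequality $\mathrm{sgn}(w)\Delta w\le\Delta|w|$ and $\mathrm{sgn}(u)\,\divv(Db(u)u)=\divv(Db(u)|u|)$).

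The one genuine difficulty is the rigour of the $\Phi$-testing. A priori one does not know $u=J_\lambda y\in\calm$, so the bound has to be squeezed out as the monotone limit over the spatial cut-offs $\chi_R$, whose error terms are controlled precisely because $\nabla\Phi=-D$ is bounded — which forces at most linear growth of $\Phi$ — and because $u\in L^1$. Likewise, the low regularity of $\Phi$ ($W^{1,1}_{\rm loc}$, with $(\Delta\Phi)^+\in L^\9$) and of $\beta(u)$ ($W^{1,q}_{\rm loc}$, $q<\tfrac d{d-1}$) in the two integrations by parts must be absorbed, most cleanly via the reduction to $y\in L^1\cap L^\9$, and in any case keeping the only ``bad'' term $-\beta(u)\Delta\Phi$ in the form $-\beta(u)(\Delta\Phi)^+$, which pairs $(\Delta\Phi)^+\in L^\9$ with $\beta(u)\in L^1$, the discarded part $\beta(u)(\Delta\Phi)^-$ having the favourable sign.
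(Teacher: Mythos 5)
Your proof is correct in substance but takes a genuinely different route from the paper's. The paper disposes of \eqref{e3.1} in one line: it quotes Lemma~6.2 of \cite{2}, which gives $\|J^\vp_\lbb y\|\le\|y\|$ for the approximating resolvents, and transfers the bound to $J_\lbb$ via the convergence $J^\vp_\lbb y\to J_\lbb y$ of Lemma~\ref{l22}(iii) together with Fatou's lemma; \eqref{e3.2}, \eqref{e4.2'} and $S(t)(K)\subset K$ then follow from \eqref{e1.12} and the exponential formula \eqref{e1.14a}, exactly as in your last two paragraphs. You instead re-derive the weighted contraction directly at the level of the limit resolvent, testing the resolvent equation against $\Phi\chi_R$ and exploiting $D=-\nabla\Phi$ together with \eqref{e1.3} through the pointwise inequality $b_0|\nabla\Phi|^2u-\beta(u)(\Delta\Phi)^+\ge0$ (valid since $0\le\beta(u)\le\mu_2u$ for $u\ge0$); this is precisely the mechanism inside the cited Lemma~6.2 of \cite{2}, so you are in effect reproving that lemma at $\vp=0$. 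What the paper's route buys is that all the delicate analysis (the low regularity of $\Phi\in W^{1,1}_{\rm loc}$ and of $\beta(J_\lbb y)$, the splitting of $\Delta\Phi$ into $(\Delta\Phi)^\pm$, the cut-off errors) is done once at the $\vp$-level, where $J^\vp_\lbb$ maps $L^1\cap L^\9$ into itself, and only soft lower semicontinuity is needed afterwards; your route must justify these manipulations for $J_\lbb y$ itself, which you correctly identify as the only real difficulty and for which your reduction to $y\in L^1\cap L^\9$ via \eqref{e1.12a} (for $\lbb\le\lbb_0$, which is all the exponential formula requires) is the right fix. Two small caveats: the resolvent-identity argument you sketch for $\lbb>\lbb_0$ is circular unless $\|J_\lbb y\|<\9$ is known a priori, so for large $\lbb$ you should simply run the cut-off argument directly (nothing in it restricts $\lbb$); and your remark that \eqref{e4.2'} for general $u_0\in C$ needs either the trivial case $\|u_0\|=\9$ or a sign-free analogue of \eqref{e3.1} addresses a point the paper itself glosses over.
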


\n{\it Proof.} By \cite[Lemma 6.2]{2}, we have $\|J^\vp_\lbb y\|\le\|y\|$ for all $y\in\calm\cap\calp,$ $\lbb>0.$ Hence \eqref{e3.1} follows by Lemma \ref{l22}(iii) and Fatou's lemma. The last assertion then follows by \eqref{e1.12} and \eqref{e1.14a}.\hfill$\Box$ \bk

 In the following, we shall denote by $\wt A$ the restriction of   the operator $A$ to $K$, that is,
\begin{equation}
\label{e3.6a}
\wt Au=Au,\ \ \ff\,u\in D(\wt A)=D(A)\cap K.
\end{equation}
By \eqref{e3.2} it follows that,  for every $\lbb>0$,
\begin{equation}
\label{e4.5}
\ov{D(\wt A)}\subset K\subset (I+\lbb\wt A)(D(\wt A))=R(I+\lbb\wt A)
\end{equation}and we have by definition that
\begin{equation}
\label{e4.6}
(I+\lbb\wt A)\1=(I+\lbb A)\1\mbox{\ \ on }R(I+\lbb\wt A).
\end{equation}Furthermore, $(\wt A,D(\wt A))$ is accretive on $L^1$ and, since $\ov{D(\wt A)}\subset K\subset \ov{D(A)}$, we conclude by \eqref{e1.14a} that $\ff u_0\in\ov{D(\wt A)}$, $t\ge0$,
\begin{equation}
\label{e4.7}
S(t)u_0=\lim_{n\to\9}\(I+\frac tn\ \wt A\)\1u_0\in\ov{D(\wt A)}. 
\end{equation}Therefore, $\wt S(t):=S(t)_{\uparrow\ov{D(\wt A)}}, t\ge0$, is the contraction semigroup generated by $\wt A$ on $\ov{D(\wt A)}$. We are going to apply Theorem 3 in \cite{3} to this semigroup $\wt S(t),$ $t\ge0$, to prove Theorem \ref{t1}. For this we need:

\begin{lemma}\label{l1} The operator $(I+\lbb \wt A)\1$ restricted to $K$ is compact for $\lbb\in(0,\lbb_0]$ with $\lbb_0$ as in Lemma {\rm\ref{l22}}.\end{lemma}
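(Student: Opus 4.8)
The statement to prove is that $(I+\lambda\widetilde A)^{-1}$ restricted to $K=\calm_\eta\cap\calp\cap C$ is compact, i.e. maps bounded subsets of $K$ into relatively compact subsets of $L^1$. Since $K$ itself is bounded in $L^1$ and $(I+\lambda\widetilde A)^{-1}=(I+\lambda A)^{-1}=J_\lambda$ on $K$ by \eqref{e4.6}, it suffices to show that $J_\lambda(K)$ is relatively compact in $L^1(\rrd)$. The plan is to use the Riesz--Fréchet--Kolmogorov compactness criterion in $L^1$: one must establish (a) uniform tightness, i.e. $\sup_{u_0\in K}\int_{\{|x|\ge R\}}|J_\lambda u_0|\,dx\to0$ as $R\to\9$, and (b) uniform $L^1$-equicontinuity of translates, i.e. $\sup_{u_0\in K}|J_\lambda u_0(\cdot+\nu)-J_\lambda u_0|_1\to0$ as $\nu\to0$.

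For tightness (a), I would use the bound \eqref{e3.1}: for $u_0\in K\subset\calm_\eta\cap\calp$ one has $\|J_\lambda u_0\|=\int_\rrd\Phi|J_\lambda u_0|\,dx\le\|u_0\|\le\eta$. Since $\Phi\ge1$ and $\Phi(x)\to\9$ as $|x|\to\9$, for any $\delta>0$ there is $R$ with $\Phi(x)\ge\eta/\delta$ for $|x|\ge R$, whence $\int_{\{|x|\ge R\}}|J_\lambda u_0|\,dx\le\frac\delta\eta\int_{\{|x|\ge R\}}\Phi|J_\lambda u_0|\,dx\le\delta$, uniformly in $u_0\in K$. This gives tightness directly.

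For equicontinuity of translates (b), the natural tool is Lemma \ref{l22}(iv). That lemma controls $\|\beta(J_\lambda f)^\nu\|_{L^q(K')}$ on compact sets $K'$ by a quantity tending to $0$ as $\nu\to0$, uniformly over $f$ in bounded subsets of $L^1$ (the constant $C$ depends only on $K',q,\lambda,|D|_\9,|b|_\9$). Combined with the already-established tightness, this should be upgraded to an estimate on $\|\beta(J_\lambda f)^\nu\|_{L^q(\rrd)}$ or at least give relative compactness of $\{\beta(J_\lambda u_0):u_0\in K\}$ in $L^q_{\mathrm{loc}}$; then one passes back from $\beta(u)$ to $u$ using that $\beta$ is a homeomorphism of $\rr$ (strictly increasing, $\beta(0)=0$) with, by \eqref{e1.2a}, $|\beta(r)|\ge\mu_1\min\{|r|^\nu,|r|\}$, so $|\beta(u)-\beta(v)|$ small in measure forces $|u-v|$ small in measure; together with tightness and the $L^1$ (hence equi-integrability via the $\Phi$-moment bound \eqref{e3.1}) control this yields $L^1$-relative compactness of $J_\lambda(K)$ by Vitali's theorem. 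I would organize this as: (1) $\beta(J_\lambda(K))$ relatively compact in $L^q_{\mathrm{loc}}$ via Lemma \ref{l22}(iv) and Riesz--Kolmogorov locally; (2) lift to relative compactness of $J_\lambda(K)$ in $L^1_{\mathrm{loc}}$ using monotonicity of $\beta$ and uniform integrability from the moment bound; (3) add global tightness from step (a) to conclude relative compactness in $L^1(\rrd)$; (4) finally note $J_\lambda(K)\subset D(A)\cap K\subset K$, and since $K$ is closed, the image is in fact relatively compact \emph{in} $K$.

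The main obstacle I expect is step (2): converting local compactness of $\{\beta(J_\lambda u_0)\}$ into local compactness of $\{J_\lambda u_0\}$ itself. Because $\beta$ may be degenerate at the origin (only $\beta'(r)>0$ for $r\ne0$, with possibly $\beta'(0)=0$), the inverse $\beta^{-1}$ is continuous but not Lipschitz near $0$, so convergence of $\beta(u_n)$ in $L^q$ gives convergence of $u_n$ only in measure on compacts a priori; one then needs the uniform $\Phi$-moment bound \eqref{e3.1} — which gives equi-integrability — to upgrade convergence in measure to $L^1$-convergence. Handling this degeneracy carefully, and making sure all estimates are uniform over $u_0\in K$ (not just for a fixed $f$), is the delicate point; everything else is a routine application of the Riesz--Fréchet--Kolmogorov and Vitali theorems.
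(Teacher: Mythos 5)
Your overall architecture coincides with the paper's: globalize via the tightness coming from the $\Phi$-moment bound \eqref{e3.1} together with Lemma A.1; obtain relative compactness of $\{\beta(J_\lbb f_n)\}$ in $L^q(K')$ for compact $K'\subset\rrd$ from Lemma \ref{l22}(iv) and the Riesz--Kolmogorov theorem; invert $\beta$ using $\beta\in C^1$, $\beta'>0$ to get a.e.\ convergence of $J_\lbb f_n$ along a subsequence; and conclude local $L^1$ convergence by Vitali. Up to that point the proposal matches the paper's proof.

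There is, however, one genuine gap, and it sits exactly at the step you yourself flag as delicate. You assert (twice) that the equi-integrability needed for Vitali's theorem on a compact set is supplied by the $\Phi$-moment bound \eqref{e3.1}. It is not: on a compact set, where $\Phi$ is bounded, the inequality $\int_\rrd\Phi|J_\lbb u_0|\,dx\le\eta$ reduces to a uniform $L^1$ bound, and a uniform $L^1$ bound does not prevent concentration on sets of small Lebesgue measure (consider $n\,\one_{[0,1/n]^d}$). The bound \eqref{e3.1} controls only the tail in $x$, i.e.\ tightness, which you have already spent. The correct source of local equi-integrability, and the place where the hypothesis $\nu>\frac{d-1}d$ is actually used, is the following: choose $q\in\(1,\frac d{d-1}\)$ so close to $\frac d{d-1}$ that $\nu q>1$; then \eqref{e1.2a} gives $\mu_1^q\min(|r|^q,|r|^{\nu q})\le|\beta(r)|^q$, so the uniform bound $\|\beta(J_\lbb f_n)\|_{L^q(K')}\le C|f_n|_1$ from \eqref{e2.15} yields a uniform bound on $\int_{K'}\min(|J_\lbb f_n|^q,|J_\lbb f_n|^{\nu q})\,dx$ with both exponents strictly greater than $1$; by the de la Vall\'ee-Poussin criterion this gives equi-integrability of $\{J_\lbb f_n\}$ on $K'$, which combined with the a.e.\ convergence closes the argument. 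Without this step your proof does not go through; with it, it reproduces the paper's proof.
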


\n{\it Proof.} Let $f_n\in K,\ n\in\nn,$ such that
\begin{equation}\label{e4.8}
\sup_{n\in\nn}|f_n|_1<\9.\end{equation}
Then, since 
$\sup\limits_{n\in\nn}\|f_n\|\le\eta$, by \eqref{e3.1}, \eqref{e4.6} and Lemma A.1, it suffices to~prove that (selecting a subsequence if necessary) for $\lbb\in\!(0,\lbb_0],
J_\lbb f_n=(I+\lbb A)\1f_n$ converges in $L^1_{\rm loc}(K)$ as $n\to\9$ for every compact set $K\subset\rr^d$.  Let  $q\in\(1,\frac d{d-1}\)$. By \eqref{e4.8}, \eqref{e2.15}, \eqref{e2.16} and the Riesz-Kolmogorov compactness theorem, it follows that (selecting a subsequence if necessary) 
  \begin{equation}\label{e4.9}
 \xym{\beta(J_\lbb f_n)\ar[r]_{\ \ \ \ n\to\9}&\eta}\mbox{ in }L^q(K)
 \end{equation}and (since this holds for every such $K$)
 $$\xym{\beta(J_\lbb f_n)\ar[r]_{\ \ \ \ n\to\9}&\eta},\mbox{ a.e. on }\rr^d,$$hence, since $\beta\in C^1$ and $\beta'>0$,
 \begin{equation}
 \label{e4.10}
\xym{J_\lbb f_n\ar[r]_{\!\!\!n\to\9}&\beta\1(\eta)}\mbox{ a.e. on }\rr^d.
\end{equation}Furthermore, because $\nu>\frac{d-1}d$, we may choose $q$ so close to $\frac d{d-1}$ that $\nu q>1$, and hence by~\eqref{e1.2a}
$$\mu^q_1\min(|r|^q,|r|^{\nu q})\le|\beta(r)|^q,\ \ff r\in\rr.$$This implies due to \eqref{e4.9} that $\{J_\lbb f_n\mid n\in\nn\}$ is equi-integrable in $L^1(K)$, hence by \eqref{e4.10} 
$$J_\lbb \xym{f_n\ar[r]_{\!\!\!\!\!n\to\9}&\beta\1(\eta)}\mbox{\ \ in }L^1(K).\eqno\Box$$

\begin{lemma}\label{l3.3} For each $u_0\in K$, the orbit $\gamma(u_0)=\{\wt S(t)u_0,\ t\ge0\}$ is precompact in $L^1$.\end{lemma}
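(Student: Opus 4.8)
The plan is to combine the compactness of the resolvent $(I+\lbb\wt A)\1$ on $K$ (Lemma \ref{l1}) with the exponential-formula representation \eqref{e1.14a}–\eqref{e4.7} of $\wt S(t)$, exploiting the fact that $\wt S(t)$ maps $K$ into $K$ (Lemma \ref{l3.1}) and is nonexpansive. The orbit $\gamma(u_0)=\{\wt S(t)u_0;\ t\ge0\}$ sits inside the closed bounded set $K$, so by Lemma A.1 in the Appendix it is enough to show that $\gamma(u_0)$ is equi-integrable and tight (in the sense controlled by $\calm_\eta$) and, on each compact $K'\subset\rr^d$, that it is precompact in $L^1(K')$; in fact, since $K$ is already contained in $\calm_\eta\cap\calp$, the $L^1$-precompactness on all of $\rr^d$ reduces via Lemma A.1 to $L^1_{\rm loc}$-precompactness. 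So the real task is: for any sequence $t_n\ge0$, extract a subsequence along which $\wt S(t_n)u_0$ converges in $L^1$.

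First I would dispose of the case of bounded $\{t_n\}$: then a further subsequence has $t_n\to t_*\in[0,\9)$, and by the strong continuity of $t\mapsto\wt S(t)u_0$ in $L^1$ (part of the Crandall–Liggett theory, since $u_0\in\ov{D(\wt A)}$) we get $\wt S(t_n)u_0\to\wt S(t_*)u_0$ in $L^1$. The substantive case is $t_n\to\9$. Here I would fix a small $\lbb\in(0,\lbb_0]$ and write, using \eqref{e4.7},
\[
\wt S(t_n)u_0=\wt S(t_n-\lbb)\wt S(\lbb)u_0,
\]
valid once $t_n\ge\lbb$, together with the semigroup/approximation identity that lets one insert a resolvent: more precisely, I would use that for fixed $\lbb$ the map $\wt S(\lbb)$ is the strong limit as $m\to\9$ of $(I+\tfrac\lbb m\wt A)^{-m}$, so $\wt S(\lbb)u_0\in\ov{(I+\tfrac\lbb m\wt A)^{-m}(K)}$. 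Since $(I+\mu\wt A)\1$ is compact on $K$ for every $\mu\in(0,\lbb_0]$ and maps $K$ into $K$ (Lemmas \ref{l1}, \ref{l3.1}), an iterate $(I+\tfrac\lbb m\wt A)^{-m}$ is compact on $K$, and hence $\wt S(\lbb)(K)$ is precompact in $L^1$; thus $\{\wt S(\lbb)u_0\}$ lies in a fixed compact set $\calK_\lbb\subset L^1$. Now apply the nonexpansive maps $\wt S(t_n-\lbb)$: from $\wt S(t_n-\lbb)$ being $1$-Lipschitz and $\wt S(t_n-\lbb)u_0\in K$, and from $|\wt S(t_n-\lbb)u_0-\wt S(t_n-\lbb)w|_1\le|u_0-w|_1$ for $w$ ranging over $\calK_\lbb$, one concludes that $\gamma(u_0)$ is within (Hausdorff) distance $\operatorname{dist}_{L^1}(u_0,\calK_\lbb)$ of the compact set $\wt S(\text{(stuff)})\calK_\lbb$ — but a cleaner route is: pick, for each $n$, a point $w_n\in\calK_\lbb$, pass to a subsequence with $w_n\to w_\infty$ and with $\wt S(\lbb)u_0$ replaced appropriately, and chase the inequality $|\wt S(t_n)u_0-\wt S(t_m)u_0|_1\le 2\varepsilon + |\wt S(t_n-\lbb)w_n-\wt S(t_m-\lbb)w_m|_1$.

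The cleanest packaging, which I would actually write, is the standard one: show that for every $\varepsilon>0$ the orbit $\gamma(u_0)$ can be covered by finitely many $\varepsilon$-balls in $L^1$. Given $\varepsilon$, choose $\lbb\in(0,\lbb_0]$ so small that $|\wt S(\lbb)u_0-u_0|_1<\varepsilon$ (strong continuity at $0$); since the $\wt S(t)$ are nonexpansive this gives $|\wt S(t+\lbb)u_0-\wt S(t)u_0|_1<\varepsilon$ for all $t\ge0$, so it suffices to $\varepsilon$-cover $\{\wt S(n\lbb)u_0;\ n\in\nn\}$. But $\wt S(n\lbb)u_0=\wt S(\lbb)^{\,?}$ — rather, iterating the exponential formula, each $\wt S(n\lbb)u_0$ lies in the closure of $\bigcup_m (I+\tfrac{n\lbb}{m}\wt A)^{-m}(K)$; choosing instead the step-$\lbb$ discretization one has $\wt S(n\lbb)u_0=\lim_{k}\big((I+\tfrac\lbb k\wt A)^{-k}\big)^n u_0$, and the outer application of the \emph{compact} map $(I+\mu\wt A)\1:K\to K$ (with $\mu$ a fixed small number $\le\lbb_0$) forces all these points into one compact subset of $L^1$, which is then covered by finitely many $\varepsilon$-balls. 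Combining the two coverings proves precompactness of $\gamma(u_0)$. The main obstacle is the bookkeeping in this last step: making rigorous that applying one (or a fixed finite number of) compact resolvent(s) $(I+\mu\wt A)\1$ to the whole orbit lands it in a single compact set, despite $\wt S(t)$ itself not being known to be compact — i.e., correctly peeling off a fixed-size piece of the flow as a compact operator and absorbing the remaining, merely nonexpansive, part. Everything else (equi-integrability via $\calm_\eta$ and Lemma A.1, strong continuity in $t$) is routine.
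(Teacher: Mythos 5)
Your overall plan---reduce precompactness of $\gamma(u_0)$ to a finite $\vp$-covering obtained from the compact resolvent of Lemma \ref{l1}---is in spirit exactly the content of the Dafermos--Slemrod theorem, and the paper's own proof consists of nothing more than citing \cite[Theorem 3]{3} and checking its three hypotheses: the range condition \eqref{e4.5}, compactness of $(I+\lbb\wt A)\1$ on $K$ (Lemma \ref{l1}), and boundedness of the orbit (from $|S(t)u_0|_1\le|u_0|_1$, since $S(t)0=0$). You are in effect trying to reprove that theorem, and the attempt has a genuine gap at precisely the step you flag as ``the main obstacle''. The failure is this: in the exponential formula $\wt S(\lbb)u_0=\lim_{k\to\9}\bigl(I+\frac{\lbb}{k}\wt A\bigr)^{-k}u_0$ the outermost resolvent has parameter $\lbb/k\to0$, so although each image $\bigl(I+\frac{\lbb}{k}\wt A\bigr)\1(K)$ is precompact, there is no single compact set containing these images for all $k$ (as $\mu\to0$ one has $(I+\mu\wt A)\1v\to v$, so the union of the images is essentially dense in $K$). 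Consequently your assertion that $\wt S(\lbb)(K)$ is precompact is unjustified: $\wt S(\lbb)v$ lies only in the closure of $\bigcup_{m\ge M}\bigl(I+\frac{\lbb}{m}\wt A\bigr)^{-m}(K)$, not in $\ov{\bigl(I+\frac{\lbb}{m}\wt A\bigr)^{-m}(K)}$ for any fixed $m$, and in general a compact resolvent does not make a nonlinear contraction semigroup compact. So ``applying one fixed compact resolvent to the whole orbit'' is exactly what your argument never achieves.

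The missing ingredient, which is what Dafermos--Slemrod actually use and what would close your covering argument, is the uniform-in-$t$ estimate valid for $u_0\in D(\wt A)$: $|J_\mu\wt S(t)u_0-\wt S(t)u_0|_1=\mu|A_\mu\wt S(t)u_0|_1\le\mu|A^\circ u_0|_1$ for all $t\ge0$, where $A_\mu=\mu\1(I-J_\mu)$ is the Yosida approximation and $A^\circ u_0$ a minimal section of $\wt Au_0$; this follows from the Lipschitz bound $|\wt S(t+h)u_0-\wt S(t)u_0|_1\le h|A^\circ u_0|_1$ together with the standard inequality $|A_\mu v|_1\le\liminf_{h\downarrow0}h\1|\wt S(h)v-v|_1$. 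With it, the entire orbit of such a $u_0$ lies within $\mu|A^\circ u_0|_1$ of the precompact set $J_\mu(\gamma(u_0))\subset J_\mu(K)$, hence is totally bounded; a general $u_0\in K$ is then handled by approximating it by elements of $D(\wt A)$ (dense in $K$ by \eqref{e3.2} and \eqref{e4.5}) and using nonexpansivity of $\wt S(t)$. Your reduction to the discrete set $\{\wt S(n\lbb)u_0;\ n\in\nn\}$ and the bounded-$t_n$ case are fine, but without this estimate (or an explicit appeal to \cite[Theorem 3]{3}) the proof does not close.
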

\begin{proof} By Theorem 3 in \cite{3}, applied to the operator $A$ with domain $K$, it suffices to show that
	$\ov{D(\wt A)}\subset R(I+\lbb\wt A),$ the operator $(I+\lbb\wt A)\1$ is compact on $K$ for some  $\lbb>0$ and that the orbit $\gamma(u_0)$ is bounded in $L^1$.
	 In fact, in \cite{3} one assumes that $0\in R(\wt A)$ to have the latter, but  in our case this follows, because $|\wt 
	S(t)u_0|_1=|S(t)u_0|_1\le|u_0|_1,$ since $S(t)$ is a Lipschitz contraction on $L^1$ with $S(t)(0)=0$ (by \eqref{e1.9a} and \eqref{e1.14a}).	
	The first condition in \cite[Theorem 3]{3} is just \eqref{e4.5},    the second is just Lemma \ref{l1}.\end{proof}

\n{\it Proof of Theorem {\rm\ref{t1}}}  ({\it continued}).  Since, by Lemma \ref{l3.3}, $\gamma(u_0)$ is precompact, it follows that $\oo(u_0)\ne\emptyset$    and that $\oo(u_0)$ is compact. Then, the conclusions of the theorem follow by Theorem 1 in \cite{3}.\hfill$\Box$

\section{Appendix}\setcounter{equation}{0}
\n{\bf Proof of Lemma \ref{l22}.}\mk

\n(i): We have seen above that $J^\vp_\lbb f$ is such a solution of \eqref{e2.14}. So, let $u_\lbb,\wt u_\lbb\in L^1\cap L^\9$ be two solutions of \eqref{e2.14}. Then, obviously, $u_\lbb,\wt u_\lbb\in H^1$ and $\beta_\vp(u_\lbb),\beta_\vp(\wt u_\lbb)\in H^2$. Let $u:=u_\lbb-\wt u_\lbb.$ Then, by~\eqref{e2.14},
$$u-\lbb\Delta(\beta_\vp(u_\lbb)-\beta_\vp(\wt u_\lbb))=-\lbb\ {\rm div}(D(b(u_\lbb)u_\lbb-b(\wt u_\lbb)\wt u_\lbb)).$$Applying $\<u,\cdot\>_{-1}$ to both sides of this equation, where $\<\cdot,\cdot\>_{-1}$ denotes an inner product in $H\1$, we find, since $\beta_\vp=\beta+\vp I,$
$$\barr{lcl}
|u|^2_{-1}+\vp\lbb|u|^2_2
&\le&\lbb\<\beta_\vp(u_\lbb)-\beta_\vp(\wt u_\lbb),u\>_{-1}\vsp
&&
-\lbb\<{\rm div}(D(b(u_\lbb)u_\lbb-b(\wt u_\lbb)\wt u_\lbb)),u\>_{-1}\vsp 
&\le&\lbb(\beta_M+C|D|_\9b_M)|u|_2|u|_{-1},\earr\eqno(5.0)$$where we used that $|{\rm div}\cdot|_{-1}\le C|\cdot|_2$ for some $C\in(0,\9)$ and where
	$$\barr{l}
	\beta_M:=\sup\left\{\dd\frac{\beta(r_1)-\beta(r_2)}{r_1-r_2};\ r_1,r_2\le M,\ r_1\ne r_2\right\}+1,\vsp 
	b_M:=\sup\left\{\dd\frac{b(r_1)r_1-b(r_2)r_2}{r_1-r_2};\ r_1,r_2\le M,\ r_1\ne r_2\right\},\earr$$and $M:=\(1+\left|({\rm div}\ D)^-+|D|\right|^{1/2}_\9|f|\).$ We recall that by the proof of Lemma 3.1 in \cite{2} (see formula (3.36)) there exists $\wt\lbb_0\in(0,\9)$ such that for any solution $u_\lbb$ of \eqref{e2.14} we have $|u_\lbb|_\9\le M$, $\ff\lbb\in(0,\wt\lbb_0)$. Hence, 	$\beta_M,b_M<\9$ if $\lbb\in(0,\wt\lbb_0)$. Hence, by Young's inequality there exists $\lbb_0\in(0,\wt\lbb_0)$, such that for some $C_\vp\in(0,\9)$ 
	$$|u|^2_{-1}\le\lbb C_\vp|u|^2_{-1},\ \ff\lbb\in(0,\lbb_0]$$Hence, $u=0.$ 

\n	(ii):  We know by \eqref{2.8'} and \eqref{e29} that  for $\lbb\in(0,\9)$ 
	$$a_\vp+\lbb(A_0)_\vp a_\vp=a_\vp$$and that by (i) for $\lbb\in(0,\lbb_0]$
	$$J^\vp_\lbb a_\vp+\lbb(A_0)_\vp J^\vp_\lbb a_\vp=a_\vp.$$Hence by the uniqueness part of (i) assertion (ii) follows since $a_\vp\in L^1\cap L^\9$ by \eqref{e28}, \eqref{e29}.\bk
	
	\n To prove (iii), for each  $f\in L^1$, we set $u_\vp=(I+\lbb A_\vp)\1f$, that is
\begin{equation}
\label{e4.1}u_\vp-\lbb\Delta\beta_\vp(u_\vp)+\lbb\,{\rm div}(Db(u_\vp)u_\vp)=f\mbox{\ \ in }\cald'(\rr^d).
\end{equation}To prove that, for $\vp\to0$ it follows that $u_\vp\to u$ in $L^1$, where $u$ is a  solution~to
\begin{equation}\label{e4.2}
u-\lbb\Delta\beta(u)+\lbb\,{\rm div}(Db(u)u)=f\mbox{\ \ in }\cald'(\rr^d),
\end{equation}where $\beta_\vp(u)=\beta(u)+\vp u$  (see \eqref{e23'}),  we shall proceed as in the proof of Lemmas  3.1 and 3.3 in \cite{2}. However, since the proof is the same, it will be sketched only. Namely, using the fact that $\beta'>0$, it follows that $u_\vp\to u\in(I+\lbb A_0)\1f$ strongly in $L^1_{\rm loc}$ and $\beta(u_\vp)\to\beta(u)$ in $L^1_{\rm loc}$.

 Now, let $\wt\Phi\in C^2(\rr^d)$ be such that $\wt\Phi(x)\to\9$ as $|x|\to\9$, $\nabla\wt\Phi\in L^\9,$ $\Delta\wt\Phi\in L^\9$.

 If we multiply \eqref{e4.1} by $\wt\Phi\exp(-\nu\wt\Phi)\calx_\delta(\wt\beta_\vp(u_\vp))$, where $\nu>0$, and integrate on $\rr^d$, we get  after some calculation identical with that in the proof of Lemma 3.3 in \cite{2} that, for each $f\in\wt\calm$, we have the estimate
	$$\|u_\vp\|_*\le\|f\|_*+C\lbb(|\Delta\wt\Phi)|_\9+|D|_\9|\nabla\wt\Phi|_\9)|f|_1,$$where
	$$\|u\|_*=\int_{\rr^d}|u(x)|\wt\Phi(x)dx,\ \wt\calm=\{f\in L^1;\,\|f\|_*<\9\}.$$By Lemma A.1 below, this implies that, for $\vp\to0$, $u_\vp\to u$ in $L^1$ and, letting $\vp\to0$ in \eqref{e4.1}, we get that $u$ is a solution to \eqref{e4.2}, as claimed.\hfill$\Box$\bk
	
	\n(iv): It follows from Lemma 3.1 in \cite{2} and its proof that there exists \mbox{$C\!\in\!(0,\9)$} only depending on $K,q,\lbb,|D|_\9$ and $|b|_\9$ such that for all $f\in L^1\cap L^\9$, $\nu\in\rr^d$, $\vp\in(0,1],$ $|J^\vp_\lbb f|_\9\le C|f|_\9,$
	\begin{equation}\label{e6.3}
	\|\beta_\vp(J^\vp_\lbb f)\|_{L^q(K)}\le C|f|_1\end{equation}and
		\begin{equation}\label{e6.4}
	\|\beta_\vp(J^\vp_\lbb f)^\nu\|_{L^q(K)}\le C\(\|E^\nu\|_{M^{\frac d{d-2}}}+\|\nabla E^\nu\|_{M^{\frac d{d-1}}}\)|f|_1.\end{equation}
		We have that $\lim\limits_{n\to\9}\|E^\nu\|_{M^{\frac d{d-2}}}+\|\nabla E^\nu\|_{M^{\frac d{d-1}}}=0$, hence, by (iii) and the Riesz-Kolmogorov compactness theorem, along a subsequence $\vp\to0$,
		$$\barr{rcll}
		J^\vp_\lbb f&\to&J_\lbb f&\mbox{ weakly in }L^q(K),\vsp 
		\beta(J^\vp_\lbb f)&\to&\eta&\mbox{ strongly in }L^q(K).\earr$$
		Since $u\mapsto\beta(u)$ is maximal monotone in each dual pair $(L^q(K), L^{q'}(K))$, hence weakly-strongly closed, we conclude that
		$$\eta=\beta(J_\lbb f).$$Hence, by Fatou's lemma we may pass to the limit in \eqref{e6.3}, \eqref{e6.4} to obtain \eqref{e2.15} and \eqref{e2.16} for all $f\in L^1\cap L^\9$. Since $L^1\cap L:^\9$ is dense in $L^1$, applying Fatou's lemma again we obtain \eqref{e2.15}, \eqref{e2.16} for all $f\in L^1$. $\Box$

\bk We use the following well known result in this paper. We include a proof for the reader's convenience.

\bk\n{\bf Lemma A.1.} {\it Let $u_n\in L^1(\rr^d)$, $n\in\nn$, such that for some $u\in L^1_{\rm loc}(\rr^d)$
	$$\lim_{n\to\9}u_n=u\mbox{\ \ in }L^1_{\rm loc}(\rr^d).$$Furthermore, let $\Phi:\rrd\to[1,\9)$ be Borel-measurable with $\{\Phi\le c\}$ relatively compact for all $c\in(0,\9)$ such that
	$$\sup_{n\in\nn}\int_\rrd|u_n|\Phi\,dx<\9.\eqno{\rm(A.1)}$$Then $\int_\rrd|u|\Phi\,dx<\9$ and $\lim\limits_{n\to\9}u_n=u$ in $L^1(\rrd).$}

\begin{proof} By Fatou's Lemma and (A.1)
	$$\int_\rrd|u|\Phi\,dx\le\sup_{n\in\nn}\int_\rrd|u_n|\Phi\,dx<\9.$$Hence, $u\in L^1(\rrd)$ and, for all $c\in(0,\9)$,
	$$\barr{l}
	\dd\limsup_{n\to\9}\int_\rrd|u{-}u_n|dx
	=\dd\limsup_{n\to\9}\int_{\{\Phi>c\}}|u{-}u_n|dx
	+\dd\limsup_{n\to\9}\int_{\ov{\{\Phi\le c\}}}|u{-}u_n|dx\vsp
	\ \hspace*{40mm}\le\dd\frac1c\sup_n\int_\rrd(|u|+|u_n|)\Phi\,dx\xym{{}\ar[r]_{c\to\9}&0}.\earr$$
\end{proof}

\bk\n{\bf  Lemma A.2.} {\it Assume that Hypotheses {\rm(i), (ii), (iii)} hold. Let $g$ be as in \eqref{e2.4a}, i.e.,
$$g(r)=\int^r_1\frac{\beta'(s)}{sb(s)}\,ds,\ \ r\in(0,\9).$$
\begin{itemize}
	\item[\rm(j)] Let $\nu\in\(1-\frac1d,1\right].$ Then
$$\lim_{r\to0}g(r)=-\9.\eqno{\rm(A.2)}$$
If, in addition, $\lim\limits_{r\to\9} g(r)=\9$, then
$$g\1(r)\le e^{\frac1{\mu_2}(b_0r+\beta(1)-\mu_1)},\ \ff r\in\(-\9,\mbox{$\frac1{b_0}$}(\mu_1-\beta(1))\right],\eqno{\rm(A.3)}$$hence, for all $\mu\in\rr$ on all of $\rrd$,
$$\barr{ll}
g\1(\mu-\Phi)\!\!\!&
\le 1_{\{\Phi\ge\mu+b^{-1}_0(\beta(1)-\mu_1)\}}
e^{\frac1{\mu_2}(b_0\mu+\beta(1)-\mu_1)}
e^{-\frac{b_0}{\mu_2}\,\Phi}\vsp
&+\,1_{\{\Phi<\mu+b^{-1}_0(\beta(1)-\mu_1)\}}
g\1(\mu-\Phi),\earr\eqno{\rm(A.4)}$$
which in turn implies that, for all $\mu\in\rr,$
$$\barr{r}
\dd\int_\rrd\!\! g\1(\mu{-}\Phi(x))\Phi(x)dx
 \le e^{\frac1{\mu_2}(b_0\mu+\beta(1)-\mu_1)}
\dd\int_\rrd
e^{-\frac{b_0}{\mu_2}\,\Phi(x)}\Phi(x)dx\vsp
 +\,g\1(\mu-1)\dd\int_{\{\Phi<\mu+b^{-1}_0(\beta(1)-\mu_1)\}}\Phi(x)dx<\9.\earr\eqno{\rm(A.5)}$$

\item[\rm(jj)] Let $\nu\in(1,\9).$ Then,
$$\lim_{r\to\9} g(r)=\9.\eqno{\rm(A.6)}$$If, in addition, $\lim\limits_{r\to0}g(r)=-\9,$ then	
$$g\1(r)\le e^{\frac1{\mu_2}(b_0r+\beta(1))},\ \ff r\in\(-\9,-\mbox{$\frac{\beta(1)}{b_0}$}\right],\eqno{\rm(A.7)}$$hence, for all $\mu\in\rr$ on all of $\rrd$,
$$\barr{ll}
g\1(\mu-\Phi)\!\!\!&
\le 1_{\left\{\Phi\ge\mu+\frac{\beta(1)}{b_0}\right\}}
e^{\frac1{\mu_2}(b_0\mu+\beta(1))}
e^{-\frac{b_0}{\mu_2}\Phi}\vsp
&+\, 1_{\left\{\Phi<\mu+\frac{\beta(1)}{b_0}\right\}}g\1(\mu-\Phi),\earr\eqno{\rm(A.8)}$$which in turn implies  that, for all $\mu\in\rr,$
$$\barr{ll}
\hspace*{-6mm}\dd\int_\rrd\!\! g\1(\mu{-}\Phi(x))\Phi(x)dx\!\!\!
&\le  e^{\frac1{\mu_2}(b_0\mu+\beta(1))}\dd\int_\rrd\!\!
e^{-\frac{b_0}{\mu_2}\Phi(x)}\Phi(x)dx\vsp&
+g\1(\mu-1)\dd\int_{\left\{\Phi<\mu+\frac{\beta(1)}{b_0}\right\}}\Phi(x) dx<\9.
\earr\eqno{\rm(A.9)}$$
\end{itemize}}

\begin{proof} First, we note that by Hypothesis (iii) and integrating by parts we find
$$1_{(0,1]}\,\frac{\wt g}{b_0}+
1_{(1,\9)}\,\frac{\wt g}{|b|_\9}
\le g\le 1_{(0,1]}\,\frac{\wt g}{|b|_\9}
+1_{(1,\9)}\,\frac{\wt g}{b_0},
\eqno{\rm(A.10)}$$where
$$\wt g(r):=\frac{\beta(r)}r-\beta(1)+\int^r_1\frac{\beta(s)}{s^2}\,ds,\ r\in(0,\9).$$

\n(j): Let $\nu\in\(1-\frac1d,1\right].$ Then, by  \eqref{e1.2a}, for $r\in(0,1]$,
		$$\wt g(r)\le\mu_2-\beta(1)-\int^1_r\frac{\mu_1}s\,ds.$$
So, (A.10) implies (A.2). Now, additionally assume that $\lim\limits_{r\to\9}g(r)=\9$, so~that $g:(0,\9)\to\rr$ is bijective. Again by (A.10) and \eqref{e1.2a} we obtain, for all $r\in(0,\9),$
$$g(r)\ge b^{-1}_0 1_{(0,1]}(r)(\mu_1-\beta(1)+\mu_2\ln r)+1_{(1,\9)}(r)g(r).$$
Replacing $r\in(0,\9)$ by $e^{\frac1{\mu_2}(b_0r+\beta(1)-\mu_1)}\in(0,1]$,  for $r\in(-\9,b^{-1}_0(\mu_1-\beta(1))],$ we obtain, since $g$  is increasing,
$$g\(e^{\frac1{\mu_2}(b_0r+\beta(1)-\mu_1)}\)\ge r,$$and thus
$$g\1(r)\le e^{\frac1{\mu_2}(b_0r+\beta(1)-\mu_1)},\ \ff r\in(-\9,b^{-1}_0(\mu_1-\beta(1))],$$which, for all $\mu\in\rr,$ implies that on all of $\rrd$
$$\barr{ll}
g\1(\mu-\Phi)\!\!\!&
\le1_{\{\Phi\ge\mu+b^{-1}_0(\beta(1)-\mu_1)\}}
e^{\frac1{\mu_2}(b_0\mu+\beta(1)-\mu_1)}
e^{-\frac{b_0}{\mu_2}\,\Phi}\vsp
&+\,1_{\{\Phi<\mu+b^{-1}_0(\beta(1)-\mu_1)\}}g\1(\mu-\Phi),\earr$$which is (A.4). (A.5) is now obvious by \eqref{e1.2} (which, in particular, implies that $\{\Phi<c\}$ is relatively compact $\ff c\in\rr$) and because $g\1$ is increasing.

\bk\n(jj): Let $\nu\in(1,\9).$ Then, by \eqref{e1.2a}, for $r\in(1,\9)$,
$$\wt g(r)\ge\mu_1-\beta(1)+\mu_1
\int^r_1\frac{1}s\,ds.$$
So, (A.10) implies (A.6). Now, additionally assume that $\lim\limits_{r\to0}g(r)=-\9$, so~that $g:(0,\9)\to\rr$ is bijective. Again by (A.10) and \eqref{e1.2a} we obtain, for all $r\in(0,\9),$
$$\barr{ll}
g(r)\!\!\!&\ge 1_{(0,1]}(r)b^{-1}_0
\(\mu_1r^{\nu-1}-\beta(1)-\dd\int^1_r\frac{\mu_2}s\,ds\)+1_{(1,\9)}(r)g(r)\vsp
&\ge1_{(0,1]}b^{-1}_0(\mu_2\ln r-\beta(1))+1_{(1,\9)}(r)g(r).\earr$$
Replacing   $r\in(0,\9)$ by $e^{\frac1{\mu_2}(b_0r+\beta(1))}\in(0,1]$ for    $r\in\(-\9,-\frac{\beta(1)}{b_0}\right],$  we obtain
$$g\(e^{\frac1{\mu_0}(b_0r+\beta(1))}\)\ge r,$$and thus
$$g\1(r)\le e^{\frac1{\mu_2}(b_0r+\beta(1))},\ \ff r\in\(-\9,-\frac{\beta(1)}{b_0}\right],$$  which, for all $\mu\in\rr,$ implies that on all of $\rrd$
$$g\1(\mu-\Phi)
\le1_{\left\{\Phi\ge\mu+\frac{\beta(1)}{b_0}\right\}}
e^{\frac1{\mu_2}(b_0\mu+\beta(1))}
e^{-\frac{b_0}{\mu_2}\,\Phi}
+1_{\left\{\Phi<\mu+\frac{\beta(1)}{b_0}\right\}}g\1(\mu-\Phi).$$Hence, (A.7), (A.8) are proved and   (A.9) is then again an obvious consequence.
\end{proof}

\mk\n{\bf Acknowledgements.} This work was supported by the DFG through CRC 1283.

\end{document}